\documentclass[reqno,11pt]{article}
\usepackage[T1]{fontenc}
\usepackage{lmodern}
\usepackage{amsmath,amssymb,amsthm,mathtools}
\usepackage[margin=1.12in]{geometry}
\usepackage[colorlinks=true,citecolor=blue,linkcolor=blue,urlcolor=blue]{hyperref}
\usepackage{microtype}
\usepackage{tikz}
\usepackage{authblk}

\newcommand{\R}{\mathbb R}
\newcommand{\Z}{\mathbb Z}
\newcommand{\N}{\mathbb N}
\newcommand{\ind}{\mathbf 1}
\newcommand{\prim}{\mathrm{prim}}
\newcommand{\Area}{\operatorname{Area}}
\newcommand{\Var}{\operatorname{Var}}
\newcommand{\E}{\mathbb E}
\newcommand{\TV}{\mathrm{TV}}
\newtheorem{theorem}{Theorem}[section]
\newtheorem{lemma}[theorem]{Lemma}
\newtheorem{proposition}[theorem]{Proposition}

\newtheorem{remark}[theorem]{Remark}
\title{Mixing of the BCZ map}
\author[1]{Albert Artiles}
\author[2]{Andrew Best}
\author[3]{Matteo Bordignon}
\affil[1]{Yau Mathematical Sciences Center, Tsinghua University}
\affil[2]{Beijing Institute of Mathematical Sciences and Applications}
\affil[3]{University of Turku}
\date{\today}
\begin{document}
\maketitle

\begin{abstract}
We prove that the Boca--Cobeli--Zaharescu (BCZ) map is strongly mixing with respect to normalized Lebesgue measure on the BCZ triangle.
\end{abstract}

\section{Introduction}
The Boca--Cobeli--Zaharescu (BCZ) map is the map $T:\Omega\rightarrow\Omega$ given by

\begin{equation*}
T(a,b) \coloneq \left(b,\left\lfloor\frac{1+a}{b}\right\rfloor b-a\right),
\end{equation*}
where $\Omega \coloneq \{(a,b)\in (0,1]^2: a+b>1\}$ is the BCZ triangle. This map was first used in \cite{BCZHall} to study the distribution of the Farey sequence and confirm a conjecture by R. Hall \cite{Hall1994}.

While studying the dynamics of the horocycle flow on the space of unimodular lattices, Athreya and Cheung \cite{AC} rediscovered the BCZ map. They identified it as the return map of the horocycle flow to the cross-section of unimodular lattices on the plane with a nonzero horizontal vector of length at most 1. This correspondence allowed the BCZ map to be studied using the theory of homogeneous dynamics. In particular, this new perspective allowed Athreya and Cheung to answer two of the questions about the BCZ map posed in \cite{BZ} by Boca and Zaharescu. They showed that $T$ is an ergodic map with respect to the uniform probability measure on $\Omega$ and that $T$ has measure-theoretic entropy zero. Later, during a 2012 workshop, Athreya, Boca, Chaika, Cheung, Chevallier, Leli\`evre, Tseng, and Quas wrote a report in which they asked whether the BCZ map is (strongly) mixing and whether it is weakly mixing \cite{Banff}.

The question of weak mixing was resolved in 2024 by Cheung and Quas \cite{CQ}, but the question of strong mixing remained open. Our main theorem is the following:

\begin{theorem}\label{thm: BCZ is mixing}
    The BCZ map is strongly mixing.
\end{theorem}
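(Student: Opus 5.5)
We plan to reduce the strong mixing of $T$ to equidistribution of long horocycle pieces, using the Athreya--Cheung description of $T$ as the first return map of the horocycle flow $h_s=\begin{pmatrix}1&0\\-s&1\end{pmatrix}$ on $X=SL_2(\R)/SL_2(\Z)$ to the transversal $\Omega\subset X$. A point $(a,b)\in\Omega$ corresponds to the lattice $\Lambda_{a,b}=g_{a,b}\Z^2$ with $g_{a,b}=\begin{pmatrix}a&b\\0&a^{-1}\end{pmatrix}$. The return time is the roof function $R(a,b)=\frac{1}{ab}$, and the suspension flow over $(\Omega,T)$ with roof $R$ is measurably isomorphic to $(X,h_s,\mu_X)$. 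The facts we will use are that $h_s$ is mixing of all orders and that long horocycle orbits equidistribute (Dani, Ratner). Since $R$ is unbounded, the usual implication ``suspension mixing $\Rightarrow$ base mixing'' fails, and this is the whole difficulty.

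\textbf{Step 1 (reduction to rectangles).} By density and the invariance of Lebesgue measure $m$, it suffices to show $m(A\cap T^{-n}B)\to m(A)m(B)$ for small boxes $A,B$ compactly contained in the interior of $\Omega$. On such boxes $R$ is bounded, and $T$ together with its iterates is piecewise smooth. For $x\in A$ we write $T^n x = h_{S_n(x)}x$, where $S_n=\sum_{k<n}R\circ T^k$.

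\textbf{Step 2 (time change via the Birkhoff sum).} Thicken $A$ and $B$ in the flow direction to sets $\hat A=\{h_s x: x\in A,\ 0\le s<\epsilon\}$ and similarly $\hat B$. Then $T^{n}x\in B$ roughly corresponds to $h_t\hat A\cap\hat B\ne\emptyset$ with $t\approx S_n(x)$. By the ergodic theorem, $S_n(x)/n\to\int R\,dm/m(\Omega)=:c$ almost everywhere. Since $R\notin L^2$, however, $S_n$ fluctuates on scales larger than $\epsilon$, so we cannot simply substitute $t=cn$. Instead we decompose $A$ into the pieces $A_{n,t}=\{x\in A: S_n(x)\in[t,t+\epsilon)\}$ and write
\begin{equation*}
m(A\cap T^{-n}B)=\sum_{t\in\epsilon\Z} m(A_{n,t}\cap T^{-n}B).
\end{equation*}
The aim is to show that for each $t$ the summand is asymptotically $m(A_{n,t})\,m(B)$, with errors that are uniformly summable.

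\textbf{Step 3 (main obstacle: decorrelating $S_n$ from the target).} We need asymptotic independence of the event $\{S_n\in[t,t+\epsilon)\}$ and the position of $h_t x$. Here the explicit structure of the BCZ map helps. On a branch of $T^n$ (fixed digits $\lfloor (1+a_k)/b_k\rfloor$), $T^n$ is the restriction of an explicit matrix in $SL_2(\Z)$ acting linearly, and $S_n$ is a sum of $1/(a_kb_k)$. This suggests the following route. Fix the value of $(a_{n-1},a_n)$, equivalently $T^{n}x$ near $B$, and use the fact that the set of lattices in $\Omega$ whose backward horocycle passes through $\hat A$ near time $t$ is itself a long horocycle-type curve pushed by geodesic renormalization, so it equidistributes by Dani--Smillie/Strömbergsson with a rate. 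To handle large $t$, we would apply the geodesic flow $a_{\log t}$ to rescale the horocycle segment of length $t$ to unit length and use effective equidistribution of expanding horocycles (Sarnak, Strömbergsson). The number of returns $n$ is then governed by the count of primitive vectors in a rescaled triangle, and the CLT/stable-law-type fluctuations of $S_n$ (the tail of $R$ behaves like $m(R>r)\asymp r^{-2}$) produce a spread of $t$ of size $\sqrt{n\log n}$. This spread is smooth in $t$, and that smoothness should give $m(A_{n,t}\cap T^{-n}B)\approx m(A_{n,t})m(B)$. The uniform error control near the cusp, where $R$ is large, is the main obstacle. We would first try to truncate: excise the set where some $R(T^kx)>M$ for $k<n$, bounding its contribution by $o(1)$ using the tail estimate together with quasi-independence. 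If that fails, we would instead seek a spectral or renewal-type argument, namely a local limit theorem for $S_n$ with mixing of the target position, in the spirit of Melbourne--Terhesiu operator renewal theory.
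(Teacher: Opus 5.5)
Your proposal has a genuine gap. Step 3, which you flag as the main obstacle, is the whole content of the theorem, and none of the tools you list closes it.

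First, the truncation fails as stated. Since $m(R>M)>0$ for every $M$, ergodicity gives $m\bigl(\{x:\max_{k<n}R(T^kx)>M\}\bigr)\to1$ for fixed $M$, so the excised set is almost everything, not $o(1)$. Making it small via $n\,m(R>M)\asymp n/M^2$ forces $M\gg\sqrt n$. The truncated Birkhoff sum then still has the $\sqrt{n\log n}$ spread you predicted, because $\int_{\{R\le M\}}R^2\,dm\asymp\log M$.

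Second, the pieces $A_{n,t}$ are level sets of $S_n$, essentially the lattices with $n$ primitive vectors in the triangle $\{0<v_1\le1,\ 0<v_2\le tv_1\}$ of area $t/2$. They are not horocycle arcs, so equidistribution of long or expanding horocycles (Dani, Ratner, Sarnak, Str\"ombergsson) says nothing about them. On your own heuristic their measures are of order $\epsilon/\sqrt{n\log n}$. You therefore need $m(A_{n,t}\cap T^{-n}B)=m(A_{n,t})\,m(B)+o(m(A_{n,t}))$ uniformly in $t$. That is a mixing local limit theorem for an infinite-variance Birkhoff sum, which is not available for $T$ and is harder than the statement you want.

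Third, operator renewal theory in the Melbourne--Terhesiu style needs an induced Gibbs--Markov or tower structure with a spectral gap. The BCZ map has zero entropy (Athreya--Cheung), so no such hyperbolic structure exists. Incidentally, mixing of a suspension does not imply mixing of the base even for bounded roofs, so the unboundedness of $R$ is not really where the difficulty lies.

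The idea you are missing is that $S_n$ never needs to be controlled. The paper uses the infinitesimal self-similarity $T_c\circ\phi_c=\phi_c\circ T$ with $c_n=1-u/n$. On $\{D_n=i\}$ one has $T^{n+i}\phi_{c_n}x=\phi_{c_n}T^nx$, and $\phi_{c_n}\to\mathrm{Id}$ in probability. Hence any weak$^*$ limit $\lambda$ of the graph measures $(\mathrm{Id},T^n)_*m$ splits as $\sum_i\lambda_i=\sum_i(\mathrm{Id}\times T^i)_*\lambda_i$, with $T\times T$-invariant components whose marginals are $p_i m$. Summing the intermediate shifts gives an $\mathrm{Id}\times T$-invariant measure, which ergodicity forces to be a multiple of $m\otimes m$. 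Weak mixing (Cheung--Quas) then forces $\lambda_i=p_i\,m\otimes m$ for every $i\ge1$.

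The only quantitative input is $\limsup_n m(D_n=0)\le 1/(1+2u)$. This holds because $D_n$ agrees, up to an event of measure $o(1)$, with the primitive-vector count in a thin rectangle of \emph{bounded} area $u\bar\rho$. The law of that count transfers to Haar measure, where its mean and variance (Siegel, Schmidt) are both asymptotically $2u$. Letting $u\to\infty$ gives $\lambda=m\otimes m$. The hard analytic step is thus replaced by a second-moment bound on a bounded count, instead of a local limit theorem for a heavy-tailed sum.
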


The main tool to prove this theorem is a bound on the discrepancy of return times of the BCZ map to an increasing sequence of subsets of $\Omega$, similar to the proof of Cheung and Quas for weak mixing. Their argument uses a positive lower bound on the probability that the $n$th induced return occurs at ordinary time $n+1$. Our argument bounds the probability of zero discrepancy and uses this bound to identify the limits of graph measures.

We now explain the key bound. Let $c\in (0,1]$ and define
\[
A_c \coloneq \{(x,y)\in\Omega: x\leq c\}.
\]
Let $\N$ be the set of positive integers and $\N_0=\N\cup\{0\}$. Denote by $T_c : A_c \to A_c$ the return map to $A_c$ induced by $T$, and write $r_c : A_c \to \N$ for the return time to $A_c$ under $T$, namely
\[ r_c(x) \coloneq \min\{ j \in \N : T^jx \in A_c\}, \]
so $T_c = T^{r_c}|_{A_c}$. Let $m$ and $m_c$ be, respectively, the uniform probability measures on $\Omega$ and $A_c$, so $dm=2\,da\,db$. Then $(\Omega,T,m)$ and $(A_c,T_c,m_c)$ are measure-preserving systems. Moreover, the map $\phi_c: \Omega \rightarrow A_c$ defined as
\begin{equation}\label{eq:phi}
 \phi_c(a,b) \coloneq \left(ca,\,cb+ca\left\lfloor\frac{1-cb}{ca}\right\rfloor\right)
\end{equation}
conjugates $T$ to $T_c$; explicitly, $T_c\circ\phi_c = \phi_c\circ T$. This conjugacy was known to Athreya and Cheung and was a key observation in the proof that the BCZ map is weakly mixing by Cheung and Quas \cite{CQ}.

It will be useful to introduce a parameter $u>0$ that we will later drive to infinity. For $n > u$, define $c_n = c_n(u) \coloneq 1-\frac{u}{n}$. Given such $u$ and $n$, define the \textbf{return times discrepancy} $D_n:\Omega\rightarrow \N_0$ by
\begin{equation}\label{definition of D_n}
 D_n(x) \coloneq \sum_{j=0}^{n-1}\bigl(r_{c_n}(T_{c_n}^j\phi_{c_n}x)-1\bigr).
\end{equation}
Thus $D_n(x)$ counts the number of visits (under $T$) that $\phi_{c_n}x$ makes to $\Omega\setminus A_{c_n}$ before the $n$th return (under $T$) of $\phi_{c_n}x$ to $A_{c_n}$. Note that the $n$th return of $\phi_{c_n}x$ to $A_{c_n}$ occurs at time $n+D_n(x)$. See Figure~\ref{Fig: Visualization of D_n} for an example.

\begin{figure}[t]
\centering
\begin{tikzpicture}[x=6cm,y=6cm,font=\small]
  \fill[gray!7] (0,1)--(1,1)--(1,0)--cycle;
  \draw[->] (0,0)--(1.12,0) node[right] {$a$};
  \draw[->] (0,0)--(0,1.12) node[above] {$b$};

  \draw[thick] (0,1)--(1,1)--(1,0);
  \draw[thick,dashed] (0,1)--(1,0);

  \foreach \t in {0.2,0.4,0.6,0.8,1} {
    \draw (\t,0.005)--(\t,-0.005) node[below] {$\t$};
    \draw (0.005,\t)--(-0.005,\t) node[left] {$\t$};
  }
  \node[below left] at (0,0) {$0$};
  \node at (0.52,1.05) {$A_{c}$};

  \draw[thick]
    ({7/8},{1/8})--({7/8},1)
    node[above] {$c$};

  \foreach \p in {(0,1),(1,0)}
    \draw[fill=white] \p circle[radius=0.8pt];

  \fill[blue]
    ({133/160},{21/80}) circle[radius=2pt]
    node[left,text=black] {$y$};

  \fill[blue] ({21/80},{119/160}) circle[radius=2pt]
    node[right,text=black] {$Ty$};

  \fill[blue] ({119/160},{77/160}) circle[radius=2pt]
    node[left,text=black] {$T^2y$};

  \fill[blue] ({77/160},{7/10}) circle[radius=2pt]
    node[right,text=black] {$T^3y$};

  \fill[blue] ({7/10},{147/160}) circle[radius=2pt]
    node[left,text=black] {$T^4y$};

  \fill[red] ({147/160},{7/32}) circle[radius=2pt]
     node[below,text=black]
      {$ \;\, T^5y$};

  \fill[blue] ({7/32},{133/160}) circle[radius=2pt]
    node[right,text=black] {$T^6y$};

  \fill[blue] ({133/160},{49/80}) circle[radius=2pt]
    node[left,text=black] {$T^7y$};
\end{tikzpicture}
\caption{Take $u=3/4$, $n=6$, $c=c_6=7/8$, and $x=(0.95,0.3)$, which does not appear in the figure. The red and blue points are the first 8 elements in the orbit of $y \coloneq \phi_{c}(x)$ under $T$, each colored red if and only if it does not belong to the triangle $A_{c}$. Since $T_{c}^6y=T^7y$, we have $D_6(x)=1$, the number of red points before the sixth induced return.}
\label{Fig: Visualization of D_n}
\end{figure}

The parameter $u$ determines the horizontal width $u/n$ of $\Omega\setminus A_{c_n}$. The following theorem bounds the asymptotic probability of zero return times discrepancy as $n\to\infty$ with $u>0$ fixed. From this theorem, we will deduce Theorem~\ref{thm: BCZ is mixing}.

\begin{theorem}\label{thm:defect}
For every $u>0$,
\[
 \limsup_{n\to\infty}m\left(\{x\in\Omega:D_n(x)=0\}\right)\le\frac{1}{1+2u}.
\]
\end{theorem}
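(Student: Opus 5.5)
The plan is a second-moment (Paley--Zygmund) argument: the constant $1/(1+2u)$ is exactly what Cauchy--Schwarz gives for a counting variable with mean $2u$ and asymptotically uncorrelated pairs. Write $c=c_n$ and $B_n\coloneq\Omega\setminus A_{c_n}=\{(a,b)\in\Omega: a>1-u/n\}$.

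First I will reduce $\{D_n=0\}$ to a statement about the orbit of $T$. Since $\phi_c$ pushes $m$ forward to $m_c$, we have $m(\{D_n=0\})=m_c(\{y\in A_c: Ty,\dots,T^ny\in A_c\})$. Because $m_c=m|_{A_c}/m(A_c)$ and $m(A_c)\to1$, this is at most $(1+o(1))\,m(\{y: N_n(y)=0\})$, where
\[
N_n(y)\coloneq\sum_{j=1}^{n}\ind_{B_n}(T^jy).
\]
A direct area computation gives $m(B_n)=2\Area(B_n)=2u/n-(u/n)^2$. By $T$-invariance, $\E_m N_n=n\,m(B_n)\to 2u$. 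Cauchy--Schwarz gives $m(N_n>0)\ge(\E N_n)^2/\E N_n^2$, so it suffices to prove
\[
\limsup_{n\to\infty}\E_m N_n^2\le 2u+4u^2,
\]
since then $\limsup m(N_n=0)\le 1-\frac{4u^2}{2u+4u^2}=\frac1{1+2u}$. By invariance,
\[
\E N_n^2=\E N_n+2\sum_{k=1}^{n-1}(n-k)\,m(B_n\cap T^{-k}B_n),
\]
so the whole task is to show that $\limsup_n \sum_{k=1}^{n-1}(n-k)\,m(B_n\cap T^{-k}B_n)\le 2u^2$. This is exactly the value obtained if every correlation $m(B_n\cap T^{-k}B_n)$ is replaced by $m(B_n)^2\approx 4u^2/n^2$. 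Only an upper bound is needed.

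I will split the sum at $k=\varepsilon n$.

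For the \emph{long range} $\varepsilon n\le k<n$, I want $m(B_n\cap T^{-k}B_n)\le(1+o(1))(2u/n)^2$ uniformly in $k$. For this I would use the Athreya--Cheung identification of $T$ as the first return map of the horocycle flow $h_s$ to the section of lattices with a short horizontal vector. The first coordinate of $T^jy$ is the length of the horizontal vector of $h_{s_j}\Lambda$, and the return times $s_j$ have average $\asymp 1$. Thus $T^k y\in B_n$ at $k\approx\varepsilon n$ corresponds to the horocycle at time $\asymp n$ hitting a set of measure $\asymp 1/n$, namely a thickening of the section. I would use effective equidistribution of long horocycle pieces, or equivalently Weil/Kloosterman-type equidistribution of Farey fractions, which is the arithmetic form of the BCZ orbit. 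This should show that, conditioned on $y\in B_n$ (a set of width $1/n$ in $a$), the distribution of $T^ky$ is asymptotically $m$ at scale $1/n$, uniformly for $k\ge\varepsilon n$. Summing $(n-k)\cdot 4u^2/n^2$ over this range gives at most $2u^2+o(1)$.

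For the \emph{short range} $1\le k<\varepsilon n$, I only need a crude bound $m(B_n\cap T^{-k}B_n)\le C/n^2$ with $C$ independent of $k$ and $n$. Then this range contributes $O(\varepsilon)$, and letting $\varepsilon\to0$ finishes the proof. Here I would use the explicit lattice description. The point $y$ corresponds to a pair of consecutive primitive vectors $(q,q')$, and $T^ky$ corresponds to a pair $(q_k,q_{k+1})$ obtained from it by $k$ Farey-type steps. The condition that both $y$ and $T^ky$ have first coordinate in $(1-u/n,1]$ forces two primitive vectors, $k$ steps apart, to have lengths within $u/n$ of the maximal value. For each fixed combinatorial itinerary, $T^k$ is a piecewise-linear map of determinant one, so this is a codimension-two condition of thickness $u/n$ in each of two transverse directions. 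The remaining issue is controlling how many pieces meet $B_n$; I would count these pieces via lattice points in a region of area $O(1)$, or via the Farey denominators bounded in terms of $k$.

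The main obstacle is the uniform short-range bound. Near $k\approx 1$ the two conditions could fail to be transverse. For example, points near the corner $(1,0)$ or the fixed point $(1,1)$ might stay in $B_n$ for several steps, and the number of linear pieces of $T^k$ grows with $k$. If a uniform $C/n^2$ bound fails, I would first try to isolate the bad set near $(1,1)$ and the cusp and show that it has measure $o(1/n)$. Then I would bound the remaining correlations by $C/n^2$ using the determinant-one piecewise-linear structure together with a count of pieces that is $O(1)$ in the relevant window.
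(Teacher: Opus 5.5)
Your reduction is correct. In your notation, $B_n=\Omega\setminus A_{c_n}$ and $N_n$ is your orbit count, not the paper's. The event $D_n(x)=0$ holds exactly when $T\phi_{c_n}x,\dots,T^n\phi_{c_n}x\in A_{c_n}$, so $m(D_n=0)\le c_n^{-2}\,m(N_n=0)$. The Cauchy--Schwarz step and the expansion of $\E_m N_n^2$ are also fine, and the final inequality is the same one the paper uses. The gap is that everything then rests on $\limsup_n\E_m N_n^2\le 2u+4u^2$, i.e.\ on Poisson-type pair correlations of $T$ for a set of measure $\asymp 1/n$, and you prove neither range estimate.

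The long-range bound $m(B_n\cap T^{-k}B_n)\le(1+o(1))(2u/n)^2$, uniform for $\varepsilon n\le k<n$, is essentially the whole theorem. It does not follow from effective equidistribution of horocycles or of Farey fractions. You need an error $o(n^{-2})$ for two sets of width $1/n$ at times of order $n$. Standard effective results give errors decaying at best polynomially in the time, multiplied by Sobolev norms that blow up polynomially in $n$ for such sets. Your statement is also about an exact discrete index $k$. In the Farey picture this needs local-limit-type control of where, in the slope ordering, the return coming from a rational $m/q$ with $q\asymp n$ falls. Nothing in the proposal addresses this.

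The short-range bound is false as stated. Take $k=2$, $c=c_n$, $a\in(c,1]$, and consider $2\le\kappa\le(a+c)/(1-c)$. The intervals $\bigl((a+c)/\kappa,(1+a)/\kappa\bigr]$ are disjoint subsets of $(1-a,1]$ on which $\lfloor(1+a)/b\rfloor=\kappa$. On them the first coordinate $\kappa b-a$ of $T^2(a,b)$ exceeds $c$. Their total length is $(1-c)\sum_{2\le\kappa\le(a+c)/(1-c)}\kappa^{-1}\asymp(u/n)\log(n/u)$. Hence $m(B_n\cap T^{-2}B_n)\gtrsim u^2\log n/n^2$; these orbits leave the corner $(1,0)$ and pass close to the parabolic fixed point $(1,1)$. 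This single term is harmless in the weighted sum. But it shows that no uniform $C/n^2$ bound from counting linear pieces can hold, so even the short range needs an estimate averaged over $k$.

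The paper avoids all correlation estimates for $T$. It applies the same second-moment inequality to the Haar count of primitive vectors in $(c_n,1]\times(0,\bar\rho n]$. That count's second factorial moment is computed exactly by Schmidt's pair formula (Proposition~\ref{prop:haar}). The paper then transfers only the \emph{law} of that count to $D_n$, in total variation (Proposition~\ref{prop:comparison of D_n and N_n}). This transfer uses just three inputs:
the first-moment bound of Lemma~\ref{Bounds on NOmega};
the ergodic theorem for $t_n$;
the almost-invariance of $\{D_n\in J\}$, which removes the roof weight.
To repair your route you would have to control $\E_m[N_n(N_n-1)]$ as a whole, for instance as a pair count of primitive lattice vectors in that rectangle. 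That leads back to the paper's argument.
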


\subsection{Strategy of the proofs}

In the proof of Theorem~\ref{thm: BCZ is mixing} (see Section~\ref{sec:graph}), we fix an arbitrary subsequential limit of graph measures. For each $u>0$, we decompose this limit according to the return times discrepancy. The weak mixing of $T$ allows us to show that every component corresponding to positive discrepancy is a scalar multiple of $m\otimes m$. Theorem~\ref{thm:defect} bounds the mass of the remaining zero-discrepancy component by $1/(1+2u)$. Although the decomposition may depend on $u$, the limit of the graph measures remains fixed. Letting $u\to\infty$, we conclude that this limit equals $m\otimes m$. Since the subsequential limit was arbitrary, the graph measures converge to $m\otimes m$, proving strong mixing.

Theorem~\ref{thm:defect} is proved by comparing the law of $D_n$ with that of a primitive lattice-point count in a thin rectangle under Haar measure, then using the count's mean and variance to bound the probability that the rectangle is empty. The proof combines the propositions stated at the beginning of Sections~\ref{sec: statistics in thin boxes}~and~\ref{sec: D_n Statistics}.

\section{Preliminaries}\label{sec:Preliminaries}

\subsection{Weak and strong mixing}
Let $(Y,\mathcal{C},\nu,f)$ be a probability measure-preserving system. The system is \textbf{weakly mixing} if the product system $f \times f$ is ergodic. The system is \textbf{(strongly) mixing} if, for every $A,B\in\mathcal{C}$,
\[
    \lim_{n\to\infty}
    \nu\bigl(A\cap f^{-n}(B)\bigr) = \nu(A)\nu(B).
\]
Strong mixing implies weak mixing. For further background,
see \cite{Walters}.

\subsection{Unimodular lattices}\label{Subsec: Unimodular lattices}
A unimodular lattice in $\R^n$ is a discrete subgroup $H$ of rank $n$ such that the quotient $\R^n/H$ has volume one. In this paper, we consider only unimodular lattices in $\R^2$, and we denote the space of these lattices by $X$. Every lattice $H\in X$ can be written as $H=g\Z^2$ for some $g\in \mathrm{SL}(2,\R)$. Thus the natural action of $\mathrm{SL}(2,\R)$ on $X$, given by $g\cdot H=gH$, is transitive. Moreover, the stabilizer of the standard lattice $\Z^2$ is precisely $\mathrm{SL}(2,\Z)$. We therefore identify $X$ with the homogeneous space
\[
    X=\mathrm{SL}(2,\R)/\mathrm{SL}(2,\Z),
\]
where the coset $g\,\mathrm{SL}(2,\Z)$ corresponds to the lattice $g\Z^2$. We denote by $\mu$ the unique $SL(2,\R)$-invariant probability measure on $X$. Although $X$ is not a group, it is a convention to call $\mu$ the Haar measure of $X$.

The horocycle flow is the flow on $X$ given by $(s,g\Z^2)\mapsto (h_sg)\Z^2$, where

 \[
 h_s\coloneq \begin{pmatrix}1&0\\-s&1\end{pmatrix}.
\]

Since $h_s\in SL(2,\R)$, it follows that $h_s$ preserves the measure $\mu$.

\subsection{Horizontally short lattices}

For $h > 0$, a unimodular lattice is called \textbf{$h$-horizontally short} if it has a nonzero horizontal vector of length at most $h$, and if $h=1$, we simply say it is \textbf{horizontally short}.

For $(a,b)\in \Omega$, we write
\[ p_{a,b}\coloneq \begin{pmatrix}a&b\\0&a^{-1}\end{pmatrix} .
\]

It follows from the arguments in \cite[Lemma 2.1 and Section 2.3]{AC} that, for every $h\in(0,1]$, each $h$-horizontally short lattice can be written in the form $p_{a,b}\Z^2$ for a unique pair $(a,b)\in A_h$. Therefore, since $A_1 = \Omega$, we identify $\Omega$ and the space of horizontally short lattices via the map $(a,b)\mapsto p_{a,b}\Z^2$. Under these coordinates, Athreya and Cheung showed that the BCZ map is the return map of the horocycle flow to $\Omega$. 

\subsection{The horocycle flow}\label{subsec: The horocycle flow}

Let $H$ be a lattice. A vector $v$ is \textbf{primitive} in $H$ if $v\in H$ and $tv\notin H$ for all $t\in(0,1)$. We write $H_{\prim}$ for the set of primitive vectors in $H$. For $x=(a,b)\in\Omega$, put $H_x\coloneq p_{a,b}\Z^2$ and $\Lambda_x\coloneq(H_x)_{\prim}$. The vectors in $H_x$ have coordinates $(a\ell+bq,q/a)$, where $(\ell,q)\in \Z^2$. Primitivity is then equivalent to $\gcd(\ell,q)=1$.

A vector $(v_1,v_2)$ with $v_1>0$ becomes horizontal under $h_s$ at time $s=v_2/v_1$. Athreya and Cheung \cite[Lemma 2.2]{AC} showed that the first-return time of the section under $h_s$ is given by the roof function
\begin{equation}\label{Eq: return time horocycle}
    R(a,b) \coloneq \frac1{ab}.
\end{equation}

\noindent In particular, $\mu$-almost every unimodular lattice can be represented uniquely as $h_sH_{(a,b)}$ with $0\leq s<R(a,b)$. The mean return time is
\[\bar\rho \coloneq \int_\Omega R\,dm=2\zeta(2)=\frac{\pi^2}{3}.\]

\noindent The suspension formula (see \cite[Theorem 1.1 and Section 3.1]{AC}) states that, for every bounded Borel function $F:X\to\R$,
\begin{equation*}
 \int_X F\,d\mu=\frac1{\bar\rho}\int_\Omega\int_0^{R(x)}F(h_sH_x)\,ds\,dm(x).
\end{equation*}
Thus $\bar\rho$ is the mass of the unnormalized suspension measure, and in these coordinates \[d\mu=(\bar\rho)^{-1}ds\,dm.\]

For $x\in \Omega$, we define $t_n(x)$ to be the $n$th return time of $x$ to $\Omega$ under the horocycle flow. By \eqref{Eq: return time horocycle}, we have
\begin{equation}\label{definition of t_n} t_n(x)=\sum_{j=0}^{n-1}R(T^jx).\end{equation}

\subsection{Infinitesimal self-similarity}
We collect here the necessary identities that will be used in both the graph-measure argument of Section~\ref{sec:graph} and the lattice-count comparison of Section~\ref{sec: D_n Statistics}. Let $c \in (0,1]$. Then $m(A_c)=c^2$ and $m_c=c^{-2}m|_{A_c}$, so Kac's lemma implies
\begin{equation}\label{eq:kac}
 \int_{A_c}(r_c-1)\,dm_c=c^{-2}-1.
\end{equation}

 The BCZ map is isomorphic to the return map for the horocycle flow to $A_c$ for every $c\in (0,1]$. This property of the BCZ map is called \textbf{infinitesimal self-similarity} and was already known to Athreya and Cheung \cite{AC}. We now explain this relation in more detail.
Let $d_c=\operatorname{diag}(c,c^{-1})$. Write $d_cH_{(a,b)}$ in the coordinates of the BCZ cross-section by adding an integral multiple of its short horizontal basis vector to the other basis vector. This gives, for all $(a,b) \in \Omega$,
\begin{equation*}
 H_{\phi_c(a,b)}=d_cH_{(a,b)}.
\end{equation*}
The same holds for primitive vectors: $\Lambda_{\phi_c(a,b)}=d_c\Lambda_{(a,b)}$. The change of time $h_{s/c^2}d_c=d_ch_s$ identifies successive returns to $\Omega$ with successive returns to $A_c$, and hence $T_c \circ \phi_c=\phi_c \circ T$. On each floor branch of \eqref{eq:phi}, the Jacobian of $\phi_c$ is $c^2$, and the inverse branch construction shows that it is an isomorphism modulo boundaries. Since $m_c=c^{-2}m|_{A_c}$, we obtain
\begin{equation}\label{eq:conjugacy}
 (\phi_c)_*m=m_c.
\end{equation}
In fact, Cheung and Quas \cite{CQ} show that $\phi_c \to \mathrm{Id}$ in $m$-probability as $c \uparrow 1$.

For $c_n=1-u/n$, where $n>u$, the return times discrepancy $D_n$ defined in \eqref{definition of D_n} satisfies
\begin{equation}\label{eq:mean-defect}
 \E_m D_n=n(c_n^{-2}-1)\longrightarrow2u,
 \qquad T^{n+D_n(x)}\phi_{c_n}(x)=\phi_{c_n}(T^nx).
\end{equation}

\subsection{Point process statistics}\label{subsec: point process statistics}
For a bounded Borel set $B\subset\R\times(0,\infty)$, define two counting functions
\[
 N_B(H)\coloneq\#(H_{\prim}\cap B)
\]
for $H \in X$ and
\[N_B^\Omega(x)\coloneq N_B(H_x)=\#(\Lambda_x\cap B)\]
for $x \in \Omega$. Both $N_B$ and $N_B^\Omega$ count the number of primitive points in a lattice contained in $B$, but the lattices are sampled from different probability spaces, respectively $(X,\mu)$ and $(\Omega,m)$. Write $\Area(B)$ for the Lebesgue measure of $B$. We will need the following corollary of Theorem 1.2 in \cite{ArtilesMoments}.

\begin{lemma}\label{Bounds on NOmega}
Let $B$ be a bounded Borel set contained in $\R\times(0,\infty)$. For every $c\in(0,1]$ and $x\in\Omega$, we have
\[
N_B(d_cH_x)=N_{d_c^{-1}B}^\Omega(x).
\]
Moreover,
\begin{align}
\E_m\!\left[N_B(d_cH_x)\right] &\leq 2\Area(B), \label{bound on exp nmbo} \\
m\bigl(\{x\in\Omega:N_B(d_cH_x)\geq1\}\bigr) &\leq 2\Area(B). \label{bound on nmbo big}
\end{align}
In particular, taking $c=1$ gives the same bounds for $N_B^\Omega$. The constants are independent of $c$.
\end{lemma}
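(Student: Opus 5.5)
The identity comes first and is purely linear-algebraic. Since $d_c\in\mathrm{SL}(2,\R)$ is a linear bijection, it maps $H_x$ onto $d_cH_x$ and preserves primitivity: $tv\in H_x$ if and only if $t\,d_cv\in d_cH_x$. Hence $(d_cH_x)_{\prim}=d_c\Lambda_x$. Also $v\in B$ if and only if $d_c^{-1}v\in d_c^{-1}B$. Counting gives
\[
N_B(d_cH_x)=\#(d_c\Lambda_x\cap B)=\#(\Lambda_x\cap d_c^{-1}B)=N^\Omega_{d_c^{-1}B}(x).
\]
Moreover $B':=d_c^{-1}B$ is again a bounded Borel subset of $\R\times(0,\infty)$, because $d_c^{-1}$ only rescales the second coordinate by $c>0$. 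Since $\det d_c^{-1}=1$, we also have $\Area(B')=\Area(B)$. So both bounds reduce to the case $c=1$, and the constants are automatically uniform in $c$. Finally, \eqref{bound on nmbo big} follows from \eqref{bound on exp nmbo} by Markov's inequality, because $N_B\ge 1$ on the event in question. It therefore suffices to prove $\E_m N^\Omega_{B'}\le 2\Area(B')$.

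For the first-moment bound one can quote Theorem 1.2 of \cite{ArtilesMoments}, but I would verify it directly by an unfolding computation in which coprimality is simply discarded. By Section~\ref{subsec: The horocycle flow}, the vectors of $H_x$ are $(a\ell+bq,\,q/a)$ with $(\ell,q)\in\Z^2$. Since $B'$ lies in the upper half plane, only $q\ge 1$ contributes. Dropping the condition $\gcd(\ell,q)=1$ gives the upper bound
\[
\E_m N^\Omega_{B'}\le \sum_{q\ge1}\int_0^1 2\sum_{\ell\in\Z}\int_{1-a}^{1}\ind_{B'}(a\ell+bq,\,q/a)\,db\,da .
\]
Fix $a$ and $q$, and substitute $v_1=a\ell+bq$, so that $db=dv_1/q$. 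As $b$ runs over $(1-a,1]$, $v_1$ runs over an interval of length $qa$. As $\ell$ varies, these intervals are translates by multiples of $a$, so they cover $\R$ with multiplicity exactly $q$. Hence the inner $\ell$-sum of $b$-integrals equals $L(q/a)$, where $L(y)$ denotes the length of the horizontal slice $\{v_1:(v_1,y)\in B'\}$.

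Next substitute $y=q/a$, so that $da=q\,y^{-2}\,dy$ and $a\in(0,1]$ corresponds to $y\ge q$. This gives
\[
\E_m N^\Omega_{B'}\le 2\int_1^\infty L(y)\,\frac{1}{y^2}\sum_{1\le q\le y}q\,dy .
\]
With $k=\lfloor y\rfloor$, the weight is $k(k+1)/(2y^2)\le (y+1)/(2y)\le 1$ for $y\ge1$. Therefore $\E_m N^\Omega_{B'}\le 2\int L(y)\,dy=2\Area(B')$.

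The only delicate point is checking this weight bound. The multiplicity-$q$ covering exactly cancels the Jacobian $1/q$, and the remaining factor $y^{-2}\sum_{q\le y}q$ must be at most $1$. This holds precisely because the region $y<1$, where $L$ receives no contribution, is excluded. If this elementary estimate failed, I would fall back on the exact first-moment formula from \cite{ArtilesMoments}.
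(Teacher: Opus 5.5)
Your proof is correct. The identity $N_B(d_cH_x)=N^\Omega_{d_c^{-1}B}(x)$, the reduction to $c=1$ using $\det d_c^{-1}=1$, and the Markov step $\ind_{\{N\ge1\}}\le N$ are exactly as in the paper. The only difference is how you prove the first-moment bound.

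The paper quotes the exact formula of \cite[Theorem~1.2]{ArtilesMoments}, namely $\E_m N_B^\Omega=\int\!\!\int \ind_B(x,y)\,2\Phi(y)y^{-2}\,dy\,dx$ with $\Phi(y)=\sum_{n\le y}\varphi(n)$, and then bounds $\Phi(y)\le y\lfloor y\rfloor\le y^2$. You instead unfold $\int_\Omega\sum_{(\ell,q)}$ directly and discard coprimality. You then use that the intervals $(a\ell+(1-a)q,\,a\ell+q]$, $\ell\in\Z$, cover $\R$ exactly $q$ times, since for each $v_1$ the admissible $\ell$ fill a half-open interval of length $q$. This gives the weight $y^{-2}\sum_{q\le y}q\le (y+1)/(2y)\le 1$ on $y\ge 1$.

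Your route is self-contained and removes the dependence on the external result. In fact, keeping the condition $\gcd(\ell,q)=1$ in the same computation gives multiplicity exactly $\varphi(q)$, because the admissible $\ell$ form a complete residue system mod $q$. So your argument essentially reproves the cited formula. The paper's route is shorter given the citation and yields an identity rather than an inequality, although only the upper bound is used later.

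One small slip: $d_c^{-1}=\operatorname{diag}(c^{-1},c)$ rescales both coordinates, not only the second. This does not affect your conclusion that $d_c^{-1}B$ is a bounded Borel subset of $\R\times(0,\infty)$ with the same area.
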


\begin{proof}
By \cite[Theorem~1.2]{ArtilesMoments},
\begin{equation}\label{albert formula}
\E_m(N_B^\Omega) = \int_{\R}\int_0^\infty \ind_B(x,y)\frac{2\Phi(y)}{y^2}\,dy\,dx,
\end{equation}
where
\[
\Phi(y)\coloneq \sum_{n=1}^{\lfloor y\rfloor}\varphi(n),
\]
and $\varphi$ denotes Euler's totient function.

For $0<y<1$, we have $\Phi(y)=0$. For $y\geq1$, the bound $\varphi(n)\leq n\leq y$ for $1\leq n\leq\lfloor y\rfloor$ gives
\[
\Phi(y) \leq \sum_{n=1}^{\lfloor y\rfloor}y = y\lfloor y\rfloor \leq y^2.
\]
Consequently, \eqref{albert formula} implies
\[
\E_m(N_B^\Omega) \leq 2\int_{\R}\int_0^\infty\ind_B(x,y)\,dy\,dx = 2\Area(B).
\]

The map $d_c$ sends primitive vectors of $H_x$ bijectively onto primitive vectors of $d_cH_x$. Therefore,
\[
N_B(d_cH_x) = N_{d_c^{-1}B}(H_x) = N_{d_c^{-1}B}^\Omega(x).
\]
Since $d_c^{-1}$ preserves area and the positive-height half-plane, applying the preceding first-moment bound to $d_c^{-1}B$ gives
\[
\E_m\!\left[N_B(d_cH_x)\right] = \E_m\!\left[N_{d_c^{-1}B}^\Omega\right] \leq 2\Area(d_c^{-1}B) = 2\Area(B).
\]
This proves \eqref{bound on exp nmbo}.

Finally,
\[
\ind_{\{N_B(d_cH_x)\geq1\}} \leq N_B(d_cH_x).
\]
Integrating with respect to $m$ and applying \eqref{bound on exp nmbo} proves \eqref{bound on nmbo big}.
\end{proof}

\section{Limits of graph measures}\label{sec:graph}

In this section we deduce Theorem~\ref{thm: BCZ is mixing} from Theorem~\ref{thm:defect}. We study subsequential limits of the distributions of the pairs $(x,T^nx)$, where $x$ is distributed according to $m$; this is a routine way to show a system is mixing. What is novel about our approach is that, for each fixed $u>0$, we decompose such a limit according to the return times discrepancy. Then, using the weak mixing of $T$, established in \cite[Theorem~1]{CQ}, we show that every component corresponding to a positive discrepancy is a scalar multiple of $m\otimes m$. Theorem~\ref{thm:defect} then bounds the mass of the remaining component, allowing us to conclude that the limit is $m\otimes m$.

Consider the compact space $\overline\Omega^{\,2}$, extending $m$ by zero outside $\Omega$. For $n\geq1$, define the graph measure on $\overline\Omega^{\,2}$ by
\[
 \Delta_n\coloneq(\mathrm{Id},T^n)_*m.
\]
Thus, for Borel sets $A,B\subset\Omega$,
\[
 \Delta_n(A\times B) = m(A\cap T^{-n}B).
\]
We use $\overset{*}{\rightharpoonup}$ to denote weak$^*$ convergence on $\overline\Omega^{\,2}$. By compactness, every sequence of graph measures has a weak$^*$ convergent subsequence. The standard characterization of mixing by graph measures therefore reduces the proof of Theorem~\ref{thm: BCZ is mixing} to showing that every limit of $\Delta_{n_j}$ with $n_j\to\infty$ is $m\otimes m$; see \cite{deLaRue2006}.

Fix such a limit
\[
 \Delta_{n_j}\overset{*}{\rightharpoonup}\lambda.
\]
Both marginals of $\lambda$ are $m$, so $\lambda$ gives no mass to the boundary. Fix $u>0$, discard the finitely many indices with $n_j\leq u$, and set $\beta_j \coloneq c_{n_j} = 1-u/n_j$.
Define, for $i \in \N_0$,
\[
 G_j(x)\coloneq\bigl(\phi_{\beta_j}(x),T^{n_j}\phi_{\beta_j}(x)\bigr),\qquad \lambda_{j,i}\coloneq(G_j)_*\bigl(\ind_{\{D_{n_j}=i\}}m\bigr).
\]
Thus $\lambda_{j,i}$ is the part of the distribution of $G_j(x)$ coming from points with $D_{n_j}(x)=i$. In particular, its total mass is $m(D_{n_j}=i)$. The positive measures of mass at most one on $\overline\Omega^{\,2}$ are also weak$^*$ compact. Passing to a diagonal subsequence, we may therefore assume that $\lambda_{j,i}\overset{*}{\rightharpoonup}\lambda_i$ for every $i$.
Write
\[
 p_i\coloneq\lambda_i(\overline\Omega^{\,2})
     =\lim_{j\to\infty}m(D_{n_j}=i),
 \qquad
 \mathcal D\coloneq T\times T,
 \qquad S\coloneq\mathrm{Id}\times T,
\]
and let $\pi_1,\pi_2$ be the coordinate projections. We extend $T$ measurably to $\overline\Omega$; the choice on the boundary will not affect any of the limiting measures below.

The following lemma gives two decompositions of $\lambda$ and describes the invariance and marginals of their components. Its proof is rather standard.
\begin{lemma}\label{lem:graph-decompositions}
The measures above satisfy
\begin{align}
 \lambda & =\sum_{i\geq0}\lambda_i
          =\sum_{i\geq0}(S^i)_*\lambda_i,
          \label{eq:graph-decompositions}\\
 \mathcal D_*\lambda_i&=\lambda_i,\qquad
 (\pi_1)_*\lambda_i=(\pi_2)_*\lambda_i=p_i m
 \quad(i\geq0),\nonumber\\
 \sum_{i\geq0}p_i&=1,\qquad
 \sum_{i\geq0}ip_i\leq2u.
 \label{eq:graph-component-masses}
\end{align}
\end{lemma}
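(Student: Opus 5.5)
We split the lemma into three groups of facts: the masses, the two decompositions, and the invariance/marginals. Each comes from a finite-$j$ identity that we pass to the weak$^*$ limit. The one real issue is mass escaping to infinitely many indices $i$. This is controlled by the first-moment bound \eqref{eq:mean-defect}: by Markov, $m(D_{n_j}\ge K)\le \E_m D_{n_j}/K\le C u/K$ uniformly in $j$.

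\textbf{Masses.} For each $j$ we have $\sum_i m(D_{n_j}=i)=1$. The tail bound gives uniform integrability, so $\sum_i p_i=1$. Fatou applied to $\sum_i i\,m(D_{n_j}=i)=\E_m D_{n_j}\to 2u$ gives $\sum_i i p_i\le 2u$.

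\textbf{First decomposition $\lambda=\sum_i\lambda_i$.}
1. Note that $\sum_i\lambda_{j,i}=(G_j)_*m=(\mathrm{Id},T^{n_j})_*(\phi_{\beta_j})_*m$.
2. By \eqref{eq:conjugacy}, $(\phi_{\beta_j})_*m=m_{\beta_j}$, and $m_{\beta_j}$ differs from $m$ in total variation by $O(1-\beta_j^2)\to0$.
3. Since $(\mathrm{Id},T^{n_j})_*$ does not increase total variation, $\sum_i\lambda_{j,i}$ differs from $\Delta_{n_j}$ by $o(1)$ in total variation, so it converges to $\lambda$.
4. Test against a continuous $f$: split $\sum_i\int f\,d\lambda_{j,i}$ into $i<K$ and a tail of mass at most $Cu/K$. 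Let $j\to\infty$, then $K\to\infty$.

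\textbf{Second decomposition $\lambda=\sum_i(S^i)_*\lambda_i$.}
1. By \eqref{eq:mean-defect}, on $\{D_{n_j}=i\}$ we have $T^{n_j+i}\phi_{\beta_j}x=\phi_{\beta_j}T^{n_j}x$. Hence $(S^i)_*\lambda_{j,i}$ is the law of $(\phi_{\beta_j}x,\phi_{\beta_j}T^{n_j}x)$ restricted to that event.
2. Summing over $i$ gives $(\phi_{\beta_j}\times\phi_{\beta_j})_*\Delta_{n_j}$.
3. Since $\phi_{\beta_j}\to\mathrm{Id}$ in $m$-probability (Cheung--Quas), and both coordinates have law $m$, this measure has the same weak$^*$ limit $\lambda$ as $\Delta_{n_j}$: test against a uniformly continuous $f$ and use dominated convergence.
4. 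Passing $(S^i)_*$ to the limit is the main obstacle, since $T$ is discontinuous. The remedy is that the second marginal of $\lambda_{j,i}$ is at most $m_{\beta_j}\le\beta_j^{-2}m$. So the limiting marginals are absolutely continuous and give no mass to the discontinuity set of $T^i$, a finite union of curves of measure zero. Weak$^*$ convergence is then preserved under $(S^i)_*$, by the continuous mapping theorem for sub-probability measures with the tightness bound.
5. The tail in $i$ is controlled exactly as before.

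\textbf{Invariance under $\mathcal D$.}
1. For finite $j$, $\mathcal D_*\lambda_{j,i}$ is the law of $(T\phi x,T^{n_j}T\phi x)$ on $\{D_{n_j}(x)=i\}$, where $\phi=\phi_{\beta_j}$.
2. Write $T\phi x$ as $\phi T_{\beta_j}$-related; more simply, use $T\phi=\phi\psi$ with $\psi$ measure-close to $T$, via the conjugacy $T_c\phi_c=\phi_cT$ and $T_c=T$ on $\{r_c=1\}$, a set of measure $1-O(u/n_j)$.
3. Then $\{D_{n_j}=i\}$ is almost invariant under $T$: $D_{n_j}(Tx)$ and $D_{n_j}(x)$ differ only on a set of measure $O(u/n_j)$. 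This holds because shifting the window of $n$ induced returns by one changes the count only when the first or the $(n{+}1)$st induced return is a long one, each of probability $O(u/n_j)$ by Kac and \eqref{eq:kac}.
4. Hence $\|\mathcal D_*\lambda_{j,i}-\lambda_{j,i}\|_{\TV}\to0$. Passing to the limit uses the same discontinuity argument, because the marginals are absolutely continuous.

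\textbf{Marginals.}
1. The first marginal of $\lambda_{j,i}$ is $(\phi_{\beta_j})_*(\ind_{\{D=i\}}m)$. By the approximate invariance it is an approximately $T$-invariant sub-measure, dominated by $\beta_j^{-2}m$.
2. In the limit it is an absolutely continuous $T$-invariant measure of mass $p_i$.
3. By ergodicity of $T$ (Athreya--Cheung), its density is constant, so it equals $p_im$. The second marginal is treated identically.
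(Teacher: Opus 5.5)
Your proposal is correct and follows essentially the same route as the paper: total-variation closeness of $m_{\beta_j}$ to $m$ plus Markov tails give the first decomposition; the identity $S^iG_j=(\phi_{\beta_j},\phi_{\beta_j}\circ T^{n_j})$ on $\{D_{n_j}=i\}$ together with $\phi_c\to\mathrm{Id}$ in probability gives the second; and the $O(1-\beta_j^2)$ almost-invariance bound, passed to the limit by the continuous mapping theorem using marginals dominated by $\beta_j^{-2}m$, followed by ergodicity, gives invariance and the marginals. Two harmless slips: the discontinuity set of $T^i$ is a countable (not finite) union of segments, and the second marginal of $\lambda_{j,i}$ is bounded by $(T^{n_j})_*m_{\beta_j}$ rather than $m_{\beta_j}$, although both are $\le\beta_j^{-2}m$, which is all you use.
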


\begin{proof}
For a finite signed measure $\nu$, write
$\|\nu\|_{\TV}=\sup_{|f|\leq1}|\int f\,d\nu|$, with the supremum over measurable functions. By \eqref{eq:conjugacy},
\[
 \sum_{i\geq0}\lambda_{j,i} =(\mathrm{Id},T^{n_j})_*m_{\beta_j}.
\]
Since $\|m_{\beta_j}-m\|_{\TV}=2(1-\beta_j^2)\to0$, this sum has weak$^*$ limit $\lambda$. Next, Markov's inequality and \eqref{eq:mean-defect} give, for $M \geq 1$,
\begin{equation}\label{eq:graph-label-tail}
 \sum_{i>M}\lambda_{j,i}(\overline\Omega^{\,2}) =m(D_{n_j}>M) \leq\frac{\E_m D_{n_j}}{M},\qquad
 \E_m D_{n_j}=n_j(\beta_j^{-2}-1) \longrightarrow 2u.
\end{equation}
Passing first to the limit in finite sums and then letting $M\to\infty$ proves $\lambda=\sum_i\lambda_i$ and $\sum_i p_i=1$. Likewise,
\[
 \sum_{i=0}^M ip_i =\lim_{j\to\infty}\sum_{i=0}^M i\,m(D_{n_j}=i) \leq 2u,
\]
which proves the other bound in \eqref{eq:graph-component-masses} upon letting $M\to\infty$.

We next prove invariance of the individual components. Let
\[
 q_j\coloneq m\bigl(\{x:r_{\beta_j}(\phi_{\beta_j}x)>1\}\bigr) \leq \beta_j^{-2}-1\longrightarrow0,
\]
where the inequality follows from \eqref{eq:kac} and \eqref{eq:conjugacy}. The definition of $D_{n_j}$ and the conjugacy give
\[
 D_{n_j}(Tx)-D_{n_j}(x) =r_{\beta_j}(\phi_{\beta_j}T^{n_j}x)-r_{\beta_j}(\phi_{\beta_j}x).
\]
The discrepancy can change only if at least one of the two return times on the right exceeds $1$. Each of these events has measure $q_j$, since $T$ preserves $m$. Hence
\[
 m(D_{n_j}\circ T\neq D_{n_j})\leq2q_j.
\]
Also, $\mathcal D G_j(x)=G_j(Tx)$ whenever $r_{\beta_j}(\phi_{\beta_j}x)=1$.

To compare the measures, put $E_{j,i}\coloneq\{D_{n_j}=i\}$. For a measurable function $f$ with $|f|\leq1$, invariance of $m$ under $T$ gives
\begin{align*}
 \int f\,d(\mathcal D_*\lambda_{j,i}-\lambda_{j,i}) = &\int \ind_{E_{j,i}}(x) \bigl[f(\mathcal D G_j(x))-f(G_j(Tx))\bigr]\,dm(x)\\
 &+\int\bigl[\ind_{E_{j,i}}(x)-\ind_{E_{j,i}}(Tx)\bigr] f(G_j(Tx))\,dm(x).
\end{align*}
The first integral has absolute value at most $2q_j$, since its integrand vanishes when $r_{\beta_j}(\phi_{\beta_j}x)=1$. The second has absolute value at most $m(D_{n_j}\circ T\neq D_{n_j})\leq 2q_j$. Taking the supremum over $f$, we obtain
\begin{equation}\label{eq:graph-almost-invariance}
 \|\mathcal D_*\lambda_{j,i}-\lambda_{j,i}\|_{\TV} \leq 4q_j\longrightarrow0.
\end{equation}

We now pass this invariance to the limit. We have
\[
 (\pi_1)_*\lambda_{j,i}\leq m_{\beta_j}\leq \beta_j^{-2}m,
 \qquad
 (\pi_2)_*\lambda_{j,i} \leq(T^{n_j})_*m_{\beta_j}\leq \beta_j^{-2}m.
\]
Testing against nonnegative continuous functions and passing to the limit shows that both marginals of $\lambda_i$ are dominated by $m$. Each fixed power of $T$ is continuous outside an $m$-null set consisting of the boundary and countably many line segments, so the discontinuity sets of $\mathcal D$ and $S^i$ are $\lambda_i$-null. The continuous mapping theorem (see, e.g., \cite[Theorem~2.7]{Billingsley}) therefore gives
\[
 \mathcal D_*\lambda_{j,i} \overset{*}{\rightharpoonup}\mathcal D_*\lambda_i,
 \qquad
 (S^i)_*\lambda_{j,i} \overset{*}{\rightharpoonup}(S^i)_*\lambda_i.
\]
Here, when $p_i>0$, we apply the theorem after normalizing each measure by its total mass. When $p_i=0$, both conclusions follow directly from convergence of the total masses to zero. Together with \eqref{eq:graph-almost-invariance}, this proves $\mathcal D_*\lambda_i=\lambda_i$. Its marginals are therefore $T$-invariant measures dominated by $m$. Since $T$ is ergodic, each marginal is a scalar multiple of $m$; see \cite[Section~1.3]{deLaRue2006}. Its mass identifies this scalar as $p_i$.

It remains to prove the second decomposition in \eqref{eq:graph-decompositions}. When $D_{n_j}(x)=i$, the return-time identity in \eqref{eq:mean-defect} gives
\[
 S^iG_j(x) =\bigl(\phi_{\beta_j}x,T^{n_j+i}\phi_{\beta_j}x\bigr) =\bigl(\phi_{\beta_j}x,\phi_{\beta_j}T^{n_j}x\bigr).
\]
Summing over $i$, we obtain
\begin{equation}\label{eq:graph-shift-identity}
 \sum_{i\geq0}(S^i)_*\lambda_{j,i} =(\phi_{\beta_j},\phi_{\beta_j}\circ T^{n_j})_*m =(\phi_{\beta_j}\times\phi_{\beta_j})_*\Delta_{n_j}.
\end{equation}
For every $\delta>0$, the fixed marginals of $\Delta_{n_j}$ imply
\begin{align*}
 \Delta_{n_j}\bigl(\{(x,y): \max\{\|\phi_{\beta_j}x-x\|,\|\phi_{\beta_j}y-y\|\}>\delta\}\bigr) \leq 2m\bigl(\{x:\|\phi_{\beta_j}x-x\|>\delta\}\bigr) \longrightarrow 0,
\end{align*}
where $\|\cdot\|$ is the Euclidean norm. Here we used $\phi_c\to\mathrm{Id}$ in $m$-probability from Section~\ref{sec:Preliminaries}. Thus the right-hand side of \eqref{eq:graph-shift-identity} converges weak$^*$ to $\lambda$ by, e.g., \cite[Theorem~3.1]{Billingsley}. The convergence of the individual pushforwards, together with \eqref{eq:graph-label-tail}, now gives $\lambda=\sum_{i\geq0}(S^i)_*\lambda_i$.
\end{proof}
\noindent The next lemma uses weak mixing to identify components corresponding to positive discrepancy.
\begin{lemma}\label{lem:graph-positive-components}
For every $i\geq1$, we have $\lambda_i=p_i(m\otimes m)$. In particular,
\begin{equation}\label{eq:graph-product-domination}
 \lambda\geq(1-p_0)(m\otimes m).
\end{equation}
\end{lemma}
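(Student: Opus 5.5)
The strategy is to encode each component $\lambda_i$ as an operator on $L^2(m)$ and to play the two decompositions in \eqref{eq:graph-decompositions} against each other, using the weak mixing of $T$ from \cite[Theorem~1]{CQ}. The final domination \eqref{eq:graph-product-domination} is then immediate: $\lambda=\sum_i\lambda_i\geq\sum_{i\geq1}\lambda_i=(1-p_0)(m\otimes m)$ by $\sum_ip_i=1$. So everything rests on the first claim. I should say up front that I do not have an argument closing its central step, and the place where I get stuck is stated explicitly below.

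\textbf{Operator encoding.} Fix $i$ with $p_i>0$. By Lemma~\ref{lem:graph-decompositions}, $\lambda_i/p_i$ is a $\mathcal D$-invariant probability measure with both marginals equal to $m$, that is, a self-joining of $(\Omega,T,m)$. Hence there is a Markov operator $Q_i$ on $L^2(m)$ with
\[
 \int f\otimes g\,d\lambda_i=p_i\langle Q_if,\bar g\rangle_{L^2(m)} .
\]
$\mathcal D$-invariance of $\lambda_i$ means that $Q_i$ commutes with the Koopman operator $U$ of $T$. The measure $(S^i)_*\lambda_i$ corresponds to $p_i\,U^{i}Q_i$ up to the convention of which side $U$ acts on. The limit $\lambda$ corresponds to the weak operator limit $P$ of $U^{n_j}$. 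The two decompositions in \eqref{eq:graph-decompositions} then read
\[
 P=\sum_{i\geq0}p_iQ_i=\sum_{i\geq0}p_iU^iQ_i
 \quad\text{on }L^2_0(m),
 \qquad\text{so}\qquad
 \sum_{i\geq1}p_i(U^i-I)Q_i=0 .
\]
The claim $\lambda_i=p_i(m\otimes m)$ is exactly the statement that $Q_i$ annihilates $L^2_0(m)$ for $i\geq1$. This is the formulation I will work with.

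\textbf{Spectral step.} For $f,g\in L^2_0(m)$ and each $i$, commutation of $Q_i$ with $U$ makes $k\mapsto\langle U^kQ_if,g\rangle$ the Fourier coefficients of a complex cross-spectral measure $\sigma_i$ on the circle. The measure $\sigma_i$ is absolutely continuous with respect to the maximal spectral type of $U|_{L^2_0}$. That type has no atoms, since weak mixing means there are no nonconstant eigenfunctions. The identity above becomes
\[
 \sum_{i\geq1}p_i(z^i-1)\,\sigma_i=0 .
\]
Each factor $z^i-1$ vanishes only on finitely many roots of unity, which are $\sigma_i$-null. So the identity would force $\sigma_i=0$ for $i\geq1$, and hence $Q_if\perp g$, provided the terms can be separated.

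\textbf{Main obstacle.} The identity is a single linear relation among several measures and does not by itself separate them. This is the gap I cannot close; it is where additional structure must enter. The approaches I would try, in order:
\begin{enumerate}
\item Choose test pairs $(f,g)$ depending on $i$ to isolate one component at a time, using the structure of $G_j$ (via the $\phi_{\beta_j}$ conjugacy, $\lambda_i$ is supported on pairs whose second coordinate equals $T^{-i}$ of a point of $\lambda$'s support).
\item Show that each $\lambda_i$, and not just the sums, satisfies a relation of the form $(S^i)_*\lambda_i\le\lambda$ together with $\lambda_i\le\lambda$. Iterating this via the analogous decompositions along $n_j+k$, which converge to $(S^k)_*\lambda$, would yield for each fixed $i\geq1$ a relation $(U^i-I)Q_i=0$ on $L^2_0$.
\end{enumerate}
If the second approach succeeds, the spectral step applies termwise. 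A relation $U^iQ_if=Q_if$ makes $Q_if$ invariant under $T^i$. The ergodicity of $T^i$, which follows from weak mixing, then gives $Q_if=0$. This yields $\lambda_i=p_i(m\otimes m)$ for every $i\geq1$.
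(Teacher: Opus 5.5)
Your reduction of \eqref{eq:graph-product-domination} to the first claim is correct. Your relation $\sum_{i\geq1}p_i(U^i-I)Q_i=0$ on $L^2_0(m)$ faithfully encodes \eqref{eq:graph-decompositions}. But, as you say, the argument stops there, and neither continuation closes the gap. In approach~2, the dominations $\lambda_i\leq\lambda$ and $(S^i)_*\lambda_i\leq\lambda$ are already immediate from the two decompositions. Since $\lambda$ is the unknown, you give no mechanism that would upgrade them to the termwise identity $(S^i)_*\lambda_i=\lambda_i$, which is a consequence of the lemma rather than an available input.

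The first missing idea is to \emph{telescope} instead of separating. Because $U^i-I=(U-I)(I+U+\cdots+U^{i-1})$, your relation says $(U-I)R=0$ with $R=\sum_{i\geq1}p_i\sum_{r=0}^{i-1}U^rQ_i$. This series converges in operator norm since $\sum_i ip_i\leq 2u$. As $R$ preserves $L^2_0(m)$, ergodicity of $T$ alone gives $R=0$ there. In measure language this is exactly the paper's proof. The measure $\nu=\sum_{i\geq1}\sum_{r=0}^{i-1}(S^r)_*\lambda_i$ has both marginals $Cm$, where $C=\sum_{i\geq1}ip_i$, and it satisfies $S_*\nu-\nu=\sum_{i\geq1}\bigl((S^i)_*\lambda_i-\lambda_i\bigr)=0$. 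An $\mathrm{Id}\times T$-invariant measure with marginals $m$ must be $m\otimes m$ when $T$ is ergodic, so $\nu=C(m\otimes m)$.

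The second missing ingredient is positivity, which your spectral reformulation discards. Even after telescoping, $\nu=C(m\otimes m)$ is a single linear constraint on the components. Linear reasoning about the complex cross-spectral measures $\sigma_i$ cannot force each $\sigma_i$ to vanish. What separates the terms is that every summand of $\nu$ is a positive measure. The $r=0$ term gives $\lambda_i\leq\nu=C(m\otimes m)$; in your language, $0\leq p_iQ_if\leq C\int f\,dm$ for $f\geq0$. Hence $\lambda_i$ has a density with respect to $m\otimes m$ bounded by $C$, and $\mathcal D$-invariance makes that density $T\times T$-invariant. Only at this point is weak mixing needed: $T\times T$ is ergodic, so the density is constant. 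Comparing total masses then gives $\lambda_i=p_i(m\otimes m)$.
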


\begin{proof}
To compare the two decompositions of $\lambda$, we sum the intermediate shifts of each component. Set
\[
 \nu\coloneq\sum_{i\geq1}\sum_{r=0}^{i-1}(S^r)_*\lambda_i, \qquad C\coloneq\sum_{i\geq1}ip_i.
\]
By \eqref{eq:graph-component-masses}, $0\leq C\leq2u$. If $C=0$, then, for every $i \geq 1$, we have $p_i=0$, hence $\lambda_i=0$, so the conclusion is immediate. We therefore assume $C>0$.
The series defining $\nu$ converges in total variation, and both marginals of $\nu$ are $Cm$. Therefore,
\begin{align*}
 S_*\nu-\nu =\sum_{i\geq1}\sum_{r=0}^{i-1} \bigl((S^{r+1})_*\lambda_i-(S^r)_*\lambda_i\bigr) = \sum_{i\geq1}\bigl((S^i)_*\lambda_i-\lambda_i\bigr)=0,
\end{align*}
where the last equality follows from \eqref{eq:graph-decompositions}. Thus $\nu/C$ has marginals $m$ and is invariant under $S=\mathrm{Id}\times T$. Ergodicity of $T$ implies that this measure is $m\otimes m$ \cite[Proposition~2.2]{deLaRue2006}, so
\[
 \nu=C(m\otimes m).
\]

For $i\geq1$, the $r=0$ term in the definition of $\nu$ gives $\lambda_i\leq C(m\otimes m)$. Since $T$ is weakly mixing, $\mathcal D=T\times T$ is ergodic with respect to $m\otimes m$. The measure $\lambda_i$ is $\mathcal D$-invariant and dominated by $C(m\otimes m)$, so it is a scalar multiple of $m\otimes m$; see \cite[Section~1.3]{deLaRue2006}. Since its mass is $p_i$, we obtain $\lambda_i=p_i(m\otimes m)$. Summing over $i\geq1$ proves \eqref{eq:graph-product-domination}.
\end{proof}

\noindent We now use Theorem~\ref{thm:defect} to control the remaining component and complete the proof of mixing.
\begin{proof}[Proof of Theorem~\ref{thm: BCZ is mixing}]
Fix a subsequential limit $\lambda$ of the graph measures. For each fixed $u>0$, choose the components as above. Theorem~\ref{thm:defect} gives
\[
 p_0=\lim_{j\to\infty}m(D_{n_j}=0)\leq\frac1{1+2u}.
\]
Lemmas~\ref{lem:graph-decompositions} and \ref{lem:graph-positive-components} therefore imply
\[
 \lambda\geq(1-p_0)(m\otimes m) \geq\frac{2u}{1+2u}\,(m\otimes m).
\]
The further subsequence used to define the components may depend on $u$, but its total graph limit is the same fixed measure $\lambda$. Letting $u\to\infty$ therefore gives $\lambda\geq m\otimes m$. Both measures have mass $1$, so $\lambda=m\otimes m$. The characterization of mixing by graph measures recalled above now proves that $T$ is strongly mixing.
\end{proof}

\section{Statistics of points in thin rectangles}\label{sec: statistics in thin boxes}

The goal of this section is to prove the following proposition, which tells us about the mean and variance of $N_B$, where $B$ is a long, thin rectangle near the line $x = 1$.

\begin{proposition}\label{prop:haar}
Let $1/2\le c<1$ and $h \geq 2$. Set
\[
 B \coloneq (c,1]\times(0,h],\qquad \varepsilon \coloneq 1-c,\qquad V \coloneq \varepsilon h.
\]
Then
\begin{align}
 \E_\mu N_B&=\frac V{\zeta(2)},\label{eq:haar-mean}\\
 \left|\Var_\mu N_B-\frac V{\zeta(2)}\right|
 &\le \frac{2\varepsilon^2h}{c\zeta(2)}(3+\log h). \label{eq:haar-variance}
\end{align}
\end{proposition}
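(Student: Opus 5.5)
The plan is to compute the first two moments of $N_B$ under $\mu$ with the Siegel and Rogers mean value formulas for primitive vectors in $\R^2$. The variance estimate then reduces to comparing a sum over determinants $k$, weighted by totient averages, with the corresponding integral.

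\emph{Mean.} By Siegel's mean value theorem for primitive vectors, $\E_\mu\sum_{v\in H_{\prim}}f(v)=\zeta(2)^{-1}\int_{\R^2}f$ for bounded compactly supported $f$. Applying it to $f=\ind_B$ gives $\E_\mu N_B=\Area(B)/\zeta(2)=\varepsilon h/\zeta(2)=V/\zeta(2)$. This is \eqref{eq:haar-mean}.

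\emph{Second moment.} Write $N_B^2=N_B+\sum_{v\neq w}\ind_B(v)\ind_B(w)$, where the sum is over primitive $v,w\in H$. The pair $w=-v$ never contributes, because $B$ lies in the upper half plane. Every other pair is linearly independent, with $\det(v,w)=k\in\Z\setminus\{0\}$. The pairs of primitive vectors with determinant $k$ form finitely many $\mathrm{SL}(2,\Z)$-orbits, one for each residue of $w$ modulo $v$ coprime to $k$; there are $\varphi(|k|)$ of them. Unfolding each orbit against $\mu$ (the two-dimensional Rogers formula) should give
\[
\E_\mu\sum_{\substack{v\neq w\\ \det(v,w)=k}}\ind_B(v)\ind_B(w)=\frac{\varphi(|k|)}{\zeta(2)\,|k|}\,g(k),\qquad g(k)\coloneq\int_{B}\int_{\{w\,:\,\det(v,w)=k\}}\ind_B(w)\,d\sigma_v(w)\,dv .
\]
Here $\sigma_v$ is the measure on the line $\{\det(v,\cdot)=k\}$ normalized so that $\int_{\R}g(k)\,dk=\Area(B)^2$. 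I will fix the exact normalization by checking consistency with the large-scale limit, where the pair density must be $\zeta(2)^{-2}\,dv\,dw$; this uses that $\varphi(k)/k$ has mean $1/\zeta(2)$. It then follows that
\[
\Var_\mu N_B-\frac V{\zeta(2)}=\frac1{\zeta(2)}\sum_{k\neq0}\frac{\varphi(|k|)}{|k|}g(k)-\frac1{\zeta(2)^2}\int_{\R}g(k)\,dk ,
\]
so the task is to show that the right-hand side is $O(\varepsilon^2h\log h/c)$ with the constant stated in \eqref{eq:haar-variance}.

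\emph{Computing $g$.} I would parametrize $v=(v_1,v_2)$ and $w=(w_1,w_2)$ with $v_1,w_1\in(c,1]$. For fixed $v$ and $w_1$, the condition $v_1w_2-v_2w_1=k$ determines $w_2=(k+v_2w_1)/v_1$. Hence $g(k)=\int\int\int v_1^{-1}\ind\{0<(k+v_2w_1)/v_1\le h\}\,dv_1\,dv_2\,dw_1$. Because the widths in $v_1$ and $w_1$ are $\varepsilon$, this expresses $g$ explicitly: it is supported on $|k|\lesssim h$, bounded by $\varepsilon^2h/c$ (the factor $v_1^{-1}\le c^{-1}$ is where the $c$ comes from), and piecewise linear in $k$ with total variation $O(\varepsilon^2h/c)$.

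\emph{Comparing sum and integral.} Put $\Phi_1(K)\coloneq\sum_{1\le k\le K}\varphi(k)/k$. The standard estimate $|\Phi_1(K)-K/\zeta(2)|\le 1+\log K$ follows from Möbius inversion, since $\varphi(k)/k=\sum_{d\mid k}\mu(d)/d$. By summation by parts on each sign of $k$, the discrepancy is bounded by the total variation of $g$ times $\sup_{K\lesssim h}|\Phi_1(K)-K/\zeta(2)|$, which gives $\varepsilon^2h\,(3+\log h)/c$ up to the factor $2/\zeta(2)$. A further small error comes from replacing the integral by the sum near $k=0$; it is absorbed into the constant $3$.

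I expect the main obstacle to be the Rogers-type pair formula with the correct $\varphi(|k|)/|k|$ weight and normalization, together with tracking the exact constants in the total variation of $g$. I would first verify the normalization on a large box, where the variance must be $o(\Area^2)$, and only then carry out the explicit estimates for $g$.
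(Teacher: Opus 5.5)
Your proposal is correct and follows essentially the paper's route: Siegel's primitive mean value formula for the mean, then Schmidt's pair formula (your Rogers-type formula, with exactly the weight $\varphi(|k|)/(\zeta(2)|k|)$ and your $g$ equal to the paper's determinant density $I$), and finally summation by parts against the $O(\log h)$ error in $\sum_{k\le t}\varphi(k)/k - t/\zeta(2)$. One point to tighten: $g$ is not piecewise linear, and what actually yields the stated constant is that $g$ is even (swap $v$ and $w$) and non-increasing on $[0,\infty)$ (for $k\ge0$ the indicator reduces to $\ind\{k+v_2w_1\le v_1h\}$), so its variation on each half-line is $g(0)\le\varepsilon^2h/c$, and the same monotonicity controls your sum-versus-integral error near $k=0$.
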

Equation \eqref{eq:haar-mean} follows from the primitive Siegel mean value formula when the test function is $\ind_B$ \cite{Siegel}. We prove \eqref{eq:haar-variance} after some preparation.

We use Fairchild's description \cite[Theorem~2.1]{Fairchild} of Schmidt's pair formula \cite[Section~6]{Schmidt}. Let $\eta$ be the Haar measure on $\mathrm{SL}(2,\R)$ normalized so that the measure of a fundamental domain for the right action of $\mathrm{SL}(2,\Z)$ is $\zeta(2)$. Let $\varphi$ denote Euler's totient function, and let $e_1=(1,0)$ and $e_2=(0,1)$. Fix $B$ as in Proposition~\ref{prop:haar}. Subtracting the diagonal contribution from the second moment and using $B\cap(-B)=\varnothing$ to remove the contribution from pairs $(v,-v)$ gives

\begin{equation}\label{eq: Schmidt formula}
    \E_\mu[N_B(N_B-1)]= \frac{1}{\zeta(2)}\sum_{k\in \Z\setminus \{0\}}\varphi(|k|)\int_{\mathrm{SL}(2,\R)}\ind_B(ge_1)\ind_B(g(e_1+ke_2))\,d\eta(g).
\end{equation}

We now proceed to compute these integrals. Let $\det:\R^2\times\R^2\to\R$ be the determinant function. Let $\operatorname{Leb}$ be Lebesgue measure on $\R^2\times\R^2$ restricted to $B\times B$, and let $I : \R \to [0,\infty)$ be the Radon--Nikodym derivative of its pushforward under the determinant map, so that $\det\nolimits_{*}\operatorname{Leb}=I(t)\,dt$. Thus, for every Borel set $E\subset\R$,
\begin{equation}\label{eq:pushforward}
 \operatorname{Leb}\bigl(\{(v,w)\in B\times B:\det(v,w)\in E\}\bigr)
 =\int_E I(t)\,dt.
\end{equation}

The following Lemma gives us a formula for $I(t)$ which we will use in subsequent computations.

\begin{lemma}
\label{lem:determinant-density}
Fix notation as in Proposition~\ref{prop:haar}. The pushforward under the determinant map of Lebesgue measure restricted to $B\times B$ has a density $I:\mathbb R\to[0,\infty)$ given by
\begin{equation}\label{eq:explicit I(t)}
I(t)= \int_c^1\int_c^1\int_0^h \frac1r \mathbf 1_{\{0<(t+sY)/r\le h\}} \,dY\,ds\,dr.
\end{equation}
\end{lemma}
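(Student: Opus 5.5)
We compute the pushforward of Lebesgue measure on $B\times B$ under $\det$ directly, by changing variables so that the determinant becomes one of the coordinates. Write $v=(r,y)$ and $w=(s,Y)$ with $r,s\in(c,1]$ and $y,Y\in(0,h]$. Then
\[
\det(v,w)=rY-sy .
\]
Since $r\ge c>0$ on $B$, for fixed $(r,s,Y)$ the map $y\mapsto t\coloneq rY-sy$ is affine with derivative $-s\neq 0$. Note that $s>0$ as well, so we could use either $y$ or $Y$ as the variable to eliminate. We eliminate one of them in favour of $t$.

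The cleanest choice, matching the form of \eqref{eq:explicit I(t)}, is to solve for the variable multiplied by $r$. Relabel if needed so that $\det(v,w)=sY'-ry$ with $(r,y)$ the coordinates of one vector. Then solve
\[
y=\frac{t+sY}{r}
\]
from $t=ry-sY$, that is, $\det$ taken in the order $(w,v)$ with the sign convention used. For fixed $(r,s,Y)$, the change of variables $y\mapsto t$ has Jacobian $dy=dt/r$. The constraint $y\in(0,h]$ becomes $0<(t+sY)/r\le h$.

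Fubini then gives, for any Borel $E\subset\R$,
\[
\operatorname{Leb}\bigl(\{(v,w)\in B\times B:\det\in E\}\bigr)=\int_E\int_c^1\int_c^1\int_0^h\frac1r\,\ind_{\{0<(t+sY)/r\le h\}}\,dY\,ds\,dr\,dt .
\]
This identifies $I$ as stated via \eqref{eq:pushforward}. Nonnegativity and measurability of the integrand make Tonelli applicable.

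The only point needing care is the sign and orientation convention, i.e.\ whether $\det(v,w)=rY-sy$ or its negative is intended. The formula as written corresponds to $t=ry-sY$, namely $\det(w,v)$ for $w=(s,Y)$, $v=(r,y)$. If the paper's convention is $\det(v,w)$, we use the symmetry of $B\times B$ under the swap $(v,w)\mapsto(w,v)$. This swap preserves Lebesgue measure and sends $\det$ to $-\det$, so the density of $\det$ equals the density of $-\det$, and either sign convention yields the same $I$. I expect this bookkeeping, not any analytic difficulty, to be the main obstacle.
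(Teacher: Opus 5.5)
Your approach is the paper's: Tonelli, then the affine substitution in one second coordinate with Jacobian $1/r$, then Tonelli again. Your final Fubini display is correct, but the bookkeeping paragraph justifying it contains a false identity.

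With $v=(r,y)$ and $w=(s,Y)$ one has $\det(w,v)=sy-rY$, not $ry-sY$. So the function you actually integrate, $ry-sY$, is not $\pm\det(v,w)$ in your labeling. The swap $(v,w)\mapsto(w,v)$ only flips the sign of $\det$, so it cannot repair a mismatch in which second coordinate is paired with $r$.

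The fix is immediate and needs no orientation or symmetry argument. In $\det(v,w)=rY-sy$, solve for $Y$, the coordinate multiplied by $r$, as you first proposed: $Y=(t+sy)/r$ and $dY=dt/r$, and the constraint $0<Y\le h$ becomes $0<(t+sy)/r\le h$. Then rename the remaining dummy variable $y\in(0,h]$ as $Y$. Equivalently, note that $ry-sY=\det\bigl((r,Y),(s,y)\bigr)$. The map $(r,y,s,Y)\mapsto((r,Y),(s,y))$ sends the box $(c,1]\times(0,h]\times(c,1]\times(0,h]$ onto $B\times B$ and preserves Lebesgue measure.

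This is exactly how the paper proceeds: it writes $v=(r,Y)$ and $w=(s,W)$ and substitutes for $W$ in $rW-sY$.
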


\begin{proof}
Let $\psi:\mathbb R\to[0,\infty)$ be a nonnegative Borel function. Write $v=(r,Y)$ and $w=(s,W)$. Since $\det(v,w)=rW-sY$, Tonelli's theorem gives
\begin{equation}\label{eq:initial}
\int_{B\times B}\psi(\det(v,w))\,dv\,dw = \int_c^1\int_c^1\int_0^h\int_0^h \psi(rW-sY)\,dW\,dY\,ds\,dr.
\end{equation}
For fixed $r,s,Y$, the substitution $t=rW-sY$ yields
\[
\int_0^h\psi(rW-sY)\,dW = \int_{\mathbb R} \psi(t)\frac1r \mathbf 1_{\{0<(t+sY)/r\le h\}}\,dt.
\]
Substituting this identity into \eqref{eq:initial} and applying Tonelli's theorem again, we obtain
\[
\begin{aligned}
\int_{B\times B}\psi(\det(v,w))\,dv\,dw & = \int_{\mathbb R}\psi(t) \left[ \int_c^1\int_c^1\int_0^h \frac1r \mathbf 1_{\{0<(t+sY)/r\le h\}} \,dY\,ds\,dr \right]dt\\
& = \int_{\mathbb R}\psi(t)I(t)\,dt.
\end{aligned}
\]
Thus \eqref{eq:explicit I(t)} defines a representative of the Radon--Nikodym derivative in \eqref{eq:pushforward}.
\end{proof}

The following lemma shows how to compute the integrals in \eqref{eq: Schmidt formula}.
\begin{lemma}\label{Lem: Schmidt and I(t)}
   Fix notation as in Proposition~\ref{prop:haar}. Let $I$ be as in \eqref{eq:explicit I(t)}. For every $k\in\Z\setminus\{0\}$,
    \begin{equation*}
    \int_{\mathrm{SL}(2,\R)}\ind_B(ge_1)\ind_B(g(e_1+ke_2))\,d\eta(g)= \frac{I(k)}{|k|}.
    \end{equation*}
\end{lemma}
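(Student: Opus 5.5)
The plan is a linear change of variables that turns the integral over $\mathrm{SL}(2,\R)$ into Lebesgue measure on $2\times 2$ matrices with fixed determinant $k$, which Lemma~\ref{lem:determinant-density} already computes.

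\emph{Step 1 (rewrite the integrand).} Identify $\R^2\times\R^2$ with $M_2(\R)$ by $(v,w)\mapsto A=(v\,|\,w)$, so that $\det(v,w)=\det A$. Put
\[
M_k\coloneq\begin{pmatrix}1&1\\0&k\end{pmatrix},\qquad \det M_k=k .
\]
Then $gM_k=(ge_1\,|\,g(e_1+ke_2))$. Hence the integral in the lemma equals $\eta(\{g\in\mathrm{SL}(2,\R): gM_k\in B\times B\})$. The set $\{gM_k: g\in \mathrm{SL}(2,\R)\}$ is exactly the level set $\{\det A=k\}$.

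\emph{Step 2 (normalization of $\eta$).} I will use Siegel's normalization. With $\eta$ normalized so that a fundamental domain for $\mathrm{SL}(2,\Z)$ has mass $\zeta(2)$, Lebesgue measure on $\{A:\det A\in(0,\infty)\}$ disintegrates as $dA=d\eta\,dt$ along $t=\det A$, via $A=g\,\mathrm{diag}(1,t)$. Concretely, for Borel $E\subset \mathrm{SL}(2,\R)$,
\[
\operatorname{Leb}\{g\,\mathrm{diag}(1,t): g\in E,\ t\in[1,1+\delta]\}=\eta(E)\,\delta+o(\delta).
\]
This is the classical statement that the Lebesgue volume of $\{A: \det A\in(0,1],\ A\in F\cdot\R_{>0}\text{-cone}\}$ is $\zeta(2)$ for a fundamental domain $F$. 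I will cite \cite{Siegel} and \cite{Fairchild}, whose normalization is chosen precisely so that this holds. This identification of constants is the step I expect to need the most care: any stray factor such as $2$ or $1/2$ would spoil the formula.

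\emph{Step 3 (transport to level $k$).} Let $\nu_t$ be the conditional measures, so that $dA=\nu_t\,dt$ along $\det A=t$ for all real $t\ne0$. Right multiplication $A\mapsto AM_k$ is linear on $M_2(\R)\cong\R^4$ with Jacobian $|\det M_k|^2=k^2$, and it maps $\{\det\in[1,1+\delta]\}$ onto $\{\det\in k[1,1+\delta]\}$. For $S=B\times B$ this gives
\[
\eta\{g: gM_k\in S\}\,\delta\approx\operatorname{Leb}\{A:AM_k\in S,\ \det A\in[1,1+\delta]\}
=k^{-2}\operatorname{Leb}\{S\cap\det\in k[1,1+\delta]\}\approx k^{-2}\,\nu_k(S)\,|k|\delta .
\]
The case $k<0$ is handled identically with the interval reversed. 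By \eqref{eq:pushforward}, $\operatorname{Leb}\{S\cap\det\in E\}=\int_E I$. So after dividing by $\delta$ and letting $\delta\to0$, the right-hand side becomes $I(k)/|k|$.

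\emph{Step 4 (continuity of $I$).} Letting $\delta\to 0$ in Step 3 requires that $t\mapsto I(t)$ is continuous at $t=k$, so that the averages $\frac{1}{|k|\delta}\int_{k[1,1+\delta]}I$ converge to $I(k)$ and not merely to $I(k)$ for a.e.\ $k$. This follows from the explicit formula \eqref{eq:explicit I(t)} by dominated convergence. The integrand is bounded by $1/c\le 2$ on a bounded domain. For fixed $(r,s)$, the indicator $\ind_{\{0<(t+sY)/r\le h\}}$ converges as $t\to k$ for all $Y$ outside at most two points, which form a null set.

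Combining Steps 1--4 gives $\int\ind_B(ge_1)\ind_B(g(e_1+ke_2))\,d\eta=I(k)/|k|$.
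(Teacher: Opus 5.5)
Your strategy (realize the $\eta$-integral as the density of Lebesgue measure on the level set $\{\det A=k\}$, then read that density off from $I$ by Lebesgue differentiation) can be made to work. The bookkeeping in Step 3, the factor $k^{-2}$ from $A\mapsto AM_k$ against an interval of length $|k|\delta$, is correct, and so is the continuity of $I$ in Step 4. However, the one nontrivial constant is not established. The global disintegration $dA=d\eta\,dt$ under $A=g\,\mathrm{diag}(1,t)$ is false. Since $\lambda g\,\mathrm{diag}(1,t)=g\,\mathrm{diag}(\lambda,\lambda^{-1})\,\mathrm{diag}(1,\lambda^2t)$ and $\eta$ is bi-invariant, scaling by $\lambda$ multiplies $dA$ by $\lambda^4$ while sending $t\mapsto\lambda^2t$. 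This forces a density proportional to $t$, and with the paper's normalization one finds $dA=t\,d\eta\,dt$. For the same reason, the cone over a fundamental domain has Lebesgue volume $\eta(F)\int_0^1t\,dt=\zeta(2)/2$, not $\zeta(2)$. The local identity you use at $t\in[1,1+\delta]$ happens to survive because $\int_1^{1+\delta}t\,dt=\delta+O(\delta^2)$. But the justification you offer for it is wrong, and this is exactly the stray factor of $2$ you were worried about. To settle it you have to fix coordinates, e.g.\ $ge_1=v=(r,Y)$ and $ge_2=(0,r^{-1})+\tau v$. Then check that $dv\,d\tau$ is left-invariant with covolume $\zeta(2)$; this follows from Siegel's primitive mean value formula, using that $\tau\in[0,1)$ is a fundamental domain for the stabilizer of $e_1$ in $\mathrm{SL}(2,\Z)$. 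Finally compute the Jacobian of $(v,\tau,t)\mapsto g\,\mathrm{diag}(1,t)$.

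Second, even granting the constant, the first ``$\approx$'' in Step 3 does not follow from Step 2. If $A=g\,\mathrm{diag}(1,t)$, then $AM_k=gM_{tk}$. So $\{A:AM_k\in S,\ \det A\in[1,1+\delta]\}$ is not of the form $\{g\,\mathrm{diag}(1,t):g\in E,\ t\in[1,1+\delta]\}$ for the fixed set $E=\{g:gM_k\in S\}$. Its measure is $\int_1^{1+\delta}t\,J(tk)\,dt$, with $J(t)\coloneq\eta\{g:gM_t\in S\}$. Without continuity of $J$ at $k$, you only control averages of $J$ near $k$, i.e.\ $J(t)=I(t)/|t|$ for almost every $t$. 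That says nothing about the value at the integer $k$. This is the same issue you handled for $I$ in Step 4, but on the other side of the identity. Continuity of $J$ does hold, by dominated convergence in $(r,Y,\tau)$: both $r$ and $r(1+t\tau)$ lie in $(c,1]$, so $\tau$ stays bounded, and $\partial B$ is hit on a null set. But this step is missing from your proof.

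Once you work in these coordinates, the paper's route is immediate and avoids both issues. With $d\eta=dr\,dY\,d\tau$, substitute $s=r(1+k\tau)$, so $|d\tau/ds|=1/(|k|r)$, and use $W=(k+sY)/r$ from $\det(v,w)=k$. This produces the integrand of \eqref{eq:explicit I(t)} at $t=k$ exactly, with no disintegration, no limit $\delta\to0$, and no continuity argument.
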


\begin{proof}
    
Write the columns of $g\in\mathrm{SL}(2,\mathbb R)$ as
\[
ge_1=v=(r,Y),
\qquad
ge_2=(0,r^{-1})+\tau v.
\]
On the region where $v\in B$, we have $r>0$, and the Haar measure $\eta$ used in \eqref{eq: Schmidt formula} is
\[
d\eta=dr\,dY\,d\tau.
\]
Set $w=g(e_1+ke_2)$, and let $s$ be the first coordinate of $w$.
Then
\[
s=r(1+k\tau), \qquad \left|\frac{d\tau}{ds}\right|=\frac{1}{|k|r}.
\]
Since $\det(v,w)=k$, the second coordinate of $w$ is
\[
W=\frac{k+sY}{r}.
\]
Consequently,
\begin{align*}
\int_{\mathrm{SL}(2,\R)} \mathbf 1_B(ge_1)\, \mathbf 1_B\bigl(g(e_1+ke_2)\bigr)\,d\eta(g) = \frac{1}{|k|} \int_c^1\int_c^1\int_0^h \frac{1}{r} \mathbf 1_{\{0<(k+sY)/r\le h\}} \,dY\,ds\,dr = \frac{I(k)}{|k|}.
\end{align*}
This completes the proof.
\end{proof}

The following lemma collects useful facts about $I$.

\begin{lemma}\label{lem: I(t) is symmetric in t} Fix notation as in Proposition~\ref{prop:haar}, and let $I$ be as in \eqref{eq:explicit I(t)}. Then $I$ is non-increasing and absolutely continuous on $[0,\infty)$, is zero outside $(-h,h)$, is even, and satisfies
\begin{equation}\label{eq:I integral}
    \int_{0}^h I(t) dt = \frac{V^2}{2} \, .
\end{equation}
\end{lemma}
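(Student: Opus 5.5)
We argue directly from the two descriptions of $I$: the defining property \eqref{eq:pushforward} and the explicit formula \eqref{eq:explicit I(t)}.

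\emph{Evenness.} The map $(v,w)\mapsto(w,v)$ preserves Lebesgue measure on $B\times B$. It also sends $\det(v,w)$ to $-\det(v,w)$. Hence $\det_*\operatorname{Leb}$ is symmetric, so $I(t)\,dt=I(-t)\,dt$ as measures. To get a pointwise statement for the representative \eqref{eq:explicit I(t)}, we use continuity of $I$, which follows from the absolute continuity argument below applied on all of $\R$.

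\emph{Support.} For $v=(r,Y)$ and $w=(s,W)$ in $B$, the coordinates $r,s,Y,W$ are all positive. Moreover $rW\le h$ and $sY\le h$. So $\det(v,w)=rW-sY$ lies in $(-h,h)$. In the formula, $I(t)\neq0$ forces some $Y\in(0,h]$ and $r,s$ with $0<t+sY\le rh$. For $t\ge h$ this gives $t+sY>h\ge rh$, so no such $Y$ exists. Evenness then handles $t\le -h$.

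\emph{Monotonicity and absolute continuity.} Fix $r,s$ and compute the inner integral in $Y$. It equals $\frac1r$ times the length of $\{Y\in(0,h]:-t/s<Y\le (rh-t)/s\}$, that is,
\[
\frac1r\Bigl|\,(0,h]\cap\bigl(-t/s,\,(rh-t)/s\bigr]\,\Bigr|.
\]
For $t\ge0$ the lower endpoint $-t/s$ is $\le0$, so the constraint $Y>-t/s$ is vacuous. The length is then $\min\{h,\max\{0,(rh-t)/s\}\}$. This is a non-increasing, Lipschitz function of $t$ with constant $1/s\le 1/c\le 2$. Integrating over $r,s\in(c,1]$ preserves both properties. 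Thus $I$ is non-increasing and Lipschitz on $[0,\infty)$, hence absolutely continuous there. The same computation shows $I$ is Lipschitz on all of $\R$, which gives the continuity needed for pointwise evenness.

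\emph{The integral identity \eqref{eq:I integral}.} Take $E=\R$ in \eqref{eq:pushforward}. This gives $\int_\R I=\operatorname{Leb}(B\times B)=\Area(B)^2=(\varepsilon h)^2=V^2$. The set $\{\det=0\}$ has measure zero and $I$ is even, so $\int_0^h I=V^2/2$.

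The only delicate point is reconciling the almost-everywhere statement from \eqref{eq:pushforward} with the everywhere claims. This is resolved by the explicit Lipschitz computation above, which also sets up the later variance estimate.
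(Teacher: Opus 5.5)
Your proof is correct. For monotonicity, the Lipschitz bound, the vanishing for $t\ge h$, and the integral identity, you do what the paper does. You integrate out $Y$ first, which gives $\frac1r\min\{h,\max\{0,(rh-t)/s\}\}$ for $t\ge0$, and you take $E=\R$ in \eqref{eq:pushforward}.

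The one genuine difference is how you prove evenness.
\begin{itemize}
\item The paper proves $I(-t)=I(t)$ pointwise. It substitutes $Y=(t+rW)/s$ in \eqref{eq:explicit I(t)} and then relabels $r\leftrightarrow s$. No regularity is needed, and continuity on $\R$ then follows from the half-line Lipschitz bound together with evenness.
\item You use the involution $(v,w)\mapsto(w,v)$ of $B\times B$, which negates the determinant. This shows the pushforward measure is even, so $I(t)=I(-t)$ only almost everywhere. You then upgrade to pointwise equality using continuity of $I$ on all of $\R$.
\end{itemize}
Your upgrade step requires the Lipschitz estimate for negative $t$ as well, where the constraint $Y>-t/s$ is no longer vacuous. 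As you note, the length of $(0,h]\cap(-t/s,(rh-t)/s]$ is still Lipschitz in $t$: it is the overlap of a fixed interval with a translating one, with constant at most $1/s$. Since this bound does not use evenness, there is no circularity.

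Your route is more conceptual, because it identifies the symmetry as swapping the two vectors. The paper's route stays entirely at the level of the explicit formula and needs no continuity input.

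A minor point: your remark that $\{\det=0\}$ is null is unnecessary. $I$ is a density with respect to Lebesgue measure on $\R$, so the single point $t=0$ never affects $\int_\R I=2\int_0^h I$.
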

\begin{proof}
We first prove pointwise symmetry using \eqref{eq:explicit I(t)}. Fix $t\in\mathbb R$. For fixed $r,s\in[c,1]$, make the substitution
\[
W=\frac{-t+sY}{r}, \qquad Y=\frac{t+rW}{s}.
\]
The condition $0<Y\leq h$ becomes $0<(t+rW)/s\leq h$, and the indicator in the defining integral restricts $W$ to $(0,h]$. Consequently,
\[
\begin{aligned}
I(-t) & = \int_c^1\int_c^1\int_0^h
\frac1r \mathbf 1_{\{0<(-t+sY)/r\leq h\}} \,dY\,ds\,dr\\
& = \int_c^1\int_c^1\int_0^h \frac1s \mathbf 1_{\{0<(t+rW)/s\leq h\}} \,dW\,ds\,dr\\
& = I(t),
\end{aligned}
\]
where the last equality follows by exchanging $r$ and $s$. Thus $I$ is even at every point.

For $t\geq0$, integrating first with respect to $Y$ in
\eqref{eq:explicit I(t)} gives
\[
I(t)= \int_c^1\int_c^1\frac1r \min\left\{h,\frac{\max\{0,rh-t\}}{s}\right\}\,ds\,dr.
\]
The integrand is non-increasing
in $t$, so $I$ is non-increasing on $[0,\infty)$. Moreover, $I(t)=0$ for $t\geq h$, since $r\leq1$. By evenness, $I(t)=0$ whenever $|t|\geq h$. For $t,t'\in \R$, we have that
\[
|I(t)-I(t')| \leq |t-t'|\int_c^1\int_c^1\frac1{rs}\,ds\,dr = \log^2(1/c)\,|t-t'|.
\]
Thus $I$ is Lipschitz, and hence absolutely continuous, on $[0,\infty)$. Since $I$ is even, it is also continuous on all of $\mathbb R$.

Finally, taking $E=\mathbb R$ in \eqref{eq:pushforward} gives
\[
\int_{\mathbb R}I(t)\,dt =\operatorname{Leb}(B\times B) =V^2.
\]
Using evenness and the fact that $I(t)=0$ for $|t|\geq h$, we obtain
\[
\int_0^h I(t)\,dt=\frac{V^2}{2},
\]
which proves \eqref{eq:I integral}.
\end{proof}

Now, using \eqref{eq: Schmidt formula}, together with Lemma~\ref{Lem: Schmidt and I(t)} and the symmetry of $I(t)$ in Lemma~\ref{lem: I(t) is symmetric in t}, we have the following identity for the second factorial moment of $N_B$:
\begin{equation}\label{eq:pair}
 \E_\mu[N_B(N_B-1)]
 =\frac1{\zeta(2)}\sum_{k\in\Z\setminus\{0\}}
 \frac{\varphi(|k|)}{|k|}I(k)
 =\frac2{\zeta(2)}\sum_{k\ge1}\frac{\varphi(k)}k I(k).
\end{equation}

It will be convenient to approximate the right-hand side of \eqref{eq:pair}. Let $\mu_{\mathrm{Mob}}$ denote the M\"obius function, and set 
\[
F(t) \coloneq \sum_{1\leq k\leq t}\frac{\varphi(k)}{k}.
\]
The standard number-theoretic identities
\[
\frac{\varphi(k)}{k} = \sum_{d\mid k}\frac{\mu_{\mathrm{Mob}}(d)}{d}
\qquad\text{and}\qquad
\sum_{d\geq1}\frac{\mu_{\mathrm{Mob}}(d)}{d^2} =\frac{1}{\zeta(2)}
\]
imply, for $t \in [0,h]$ and $h \geq 2$, 
\begin{equation}\label{eq:totientbound}
 \left|F(t)-\frac{t}{\zeta(2)}\right|\le3+\log h
 .
\end{equation}

\begin{lemma}\label{lem:totient-weighted-I}
Fix notation as in Proposition~\ref{prop:haar}, and let $I$ be as in \eqref{eq:explicit I(t)}. Then
\begin{equation*}
\left| \sum_{k\geq1}\frac{\varphi(k)}{k}I(k) -\frac{1}{\zeta(2)}\int_0^h I(t)\,dt \right| \leq (3+\log h)I(0).
\end{equation*}
\end{lemma}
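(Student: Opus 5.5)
The plan is to write the sum as a Riemann--Stieltjes integral against the step function $F$ and then integrate by parts. Lemma~\ref{lem: I(t) is symmetric in t} says that $I$ vanishes for $t\ge h$, so only $k\le h$ contribute, and
\[
\sum_{k\geq1}\frac{\varphi(k)}{k}I(k)=\int_{(0,h]} I(t)\,dF(t).
\]
Put $E(t)\coloneq F(t)-t/\zeta(2)$ for $t\in[0,h]$. Then
\[
dF = \frac{dt}{\zeta(2)} + dE,
\]
and the main term is exactly $\frac{1}{\zeta(2)}\int_0^h I(t)\,dt$. It therefore suffices to show
\[
\left|\int_{(0,h]}I\,dE\right|\le (3+\log h)\,I(0).
\]

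Next, integrate by parts. Lemma~\ref{lem: I(t) is symmetric in t} says that $I$ is absolutely continuous on $[0,\infty)$, and $E$ has bounded variation, so
\[
\int_{(0,h]} I\,dE = I(h)E(h)-I(0)E(0)-\int_0^h E(t)I'(t)\,dt.
\]
Both boundary terms vanish: $I(h)=0$, because $I$ is continuous and zero outside $(-h,h)$, and $E(0)=F(0)=0$. What remains is
\[
-\int_0^h E(t)\,I'(t)\,dt.
\]

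To bound this, use the estimate \eqref{eq:totientbound}, which gives $|E(t)|\le 3+\log h$ on $[0,h]$. Since $I$ is non-increasing on $[0,\infty)$, we have $I'\le0$ almost everywhere, and so
\[
\int_0^h |I'(t)|\,dt=I(0)-I(h)=I(0).
\]
Combining the two gives
\[
\left|\int_0^h E\,I'\,dt\right|\le(3+\log h)\,I(0),
\]
which is the claimed bound.

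The only step needing care is justifying the integration by parts with a jump function against an absolutely continuous one, together with the endpoint conventions. To avoid these issues, I would instead use Abel summation directly: write
\[
I(k)=-\int_k^h I'(t)\,dt,
\]
which holds because $I(h)=0$, and swap the sum and the integral. This gives
\[
\sum_{k\ge1}\frac{\varphi(k)}{k} I(k)=-\int_0^h F(t)I'(t)\,dt.
\]
Similarly, Fubini gives
\[
\int_0^h I(t)\,dt=-\int_0^h tI'(t)\,dt.
\]
Subtracting the second identity, divided by $\zeta(2)$, from the first yields $-\int_0^h E(t)I'(t)\,dt$, and the bound above then applies with no boundary terms to handle.
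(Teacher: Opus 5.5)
Your proposal is correct, and the Abel-summation version you settle on in your last paragraph is exactly the paper's proof: write $I(k)=\int_k^h(-I'(t))\,dt$ using $I(h)=0$ and absolute continuity, and swap the finite sum with the integral to get $\int_0^h F(t)(-I'(t))\,dt$. Then write $\int_0^h I$ as $\int_0^h t(-I'(t))\,dt$, and bound the difference by $(3+\log h)\int_0^h(-I'(t))\,dt=(3+\log h)I(0)$ via \eqref{eq:totientbound} and monotonicity of $I$; your preliminary Stieltjes formulation is also valid (since $I$ is continuous, $E$ is right-continuous of bounded variation, and both boundary terms vanish), but it is only a repackaging of the same computation.
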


\begin{proof}
On the one hand, since $I(h)=0$ and $I$ is absolutely continuous on $(0,\infty)$, we have
\begin{equation*}
I(k)=\int_k^h -I'(t)\,dt
\end{equation*}
for $k \in [1,h]$. Since $I$ is non-increasing on $(0,\infty)$, it then follows that
\begin{equation*}
\sum_{k\geq1}\frac{\varphi(k)}{k} I(k) = \int_0^h F(t)(-I'(t))\,dt.
\end{equation*}
On the other hand, $I(h) = 0$ and integration by parts gives
\begin{equation*}
\int_0^h I(t)\,dt = \int_0^h t(-I'(t))\,dt.
\end{equation*}
Combining these identities with the bound \eqref{eq:totientbound} implies
\begin{equation*}
\begin{aligned}
\left| \sum_{k\geq1}\frac{\varphi(k)}{k}I(k) -\frac{1}{\zeta(2)}\int_0^h I(t)\,dt \right|
& = \left| \int_0^h \left(F(t)-\frac{t}{\zeta(2)}\right)(-I'(t))\,dt \right| \\
& \leq(3+\log h)\int_0^h-I'(t)\,dt =(3+\log h)I(0). \qedhere
\end{aligned}
\end{equation*}
\end{proof}

\begin{proof}[Proof of Proposition~\ref{prop:haar}]
It remains only to show \eqref{eq:haar-variance}. Lemma~\ref{lem:totient-weighted-I}, along with \eqref{eq:I integral} and \eqref{eq:pair}, gives
\begin{equation}\label{combining things}
 \left|\E_\mu[N_B(N_B-1)]-\frac{V^2}{\zeta(2)^2}\right| \le\frac{2(3+\log h)}{\zeta(2)}I(0).
\end{equation}
At $t=0$, \eqref{eq:explicit I(t)} gives
\begin{equation}\label{eq:Izero}
I(0)=h\int_c^1\int_c^1\frac{ds\,dr}{\max(r,s)} =2h(1-c+c\log c)\le\frac{\varepsilon^2h}{c}.
\end{equation}
Since $\Var_\mu N_B=\E_\mu[N_B(N_B-1)]+\E_\mu N_B-(\E_\mu N_B)^2$, combining \eqref{combining things} and \eqref{eq:Izero} proves \eqref{eq:haar-variance}.
\end{proof}

\section{Statistics of return times discrepancy}\label{sec: D_n Statistics}
Fix $u>0$, and, for $n>u$, recall $c_n=1-\frac{u}{n}$. We use the following notation:
\[
 B_n \coloneq (c_n,1]\times(0,\bar\rho n],\qquad N_n(H) \coloneq N_{B_n}(H) \qquad \text{ for } H\in X.
\]
The rectangle $B_n$ has width $u/n$ and fixed area $u\bar\rho$. On $(\Omega,m)$, set
\[ M_n(x) \coloneq N_{B_n}(d_{c_n}H_x)=N_{B_n}(H_{\phi_{c_n}x}).
\] 
The law of $M_n$ samples from $\Omega$ with the measure $m$, whereas the law of $N_n$ samples the full space $X$ with the measure $\mu$. 

The next proposition compares the law of $D_n$ to the law of $N_n$ when $n$ is large. Its proof appears near the end of this section, after which we prove Theorem~\ref{thm:defect}.
\begin{proposition}\label{prop:comparison of D_n and N_n}
For every fixed $u>0$,
\begin{equation*}
 \lim_{n\rightarrow\infty}\sup_{J\subset\N_0}\bigl|m(D_n\in J)-\mu(N_n\in J)\bigr|=0.
\end{equation*}
\end{proposition}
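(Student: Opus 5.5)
The plan is to pass through the intermediate variable $M_n$ and prove two approximations:
\[
\lim_{n\to\infty} m(D_n\neq M_n)=0
\qquad\text{and}\qquad
\lim_{n\to\infty}\sup_{J\subset\N_0}\bigl|m(M_n\in J)-\mu(N_n\in J)\bigr|=0 .
\]
Both rest on the same geometric fact. Along the horocycle orbit of a lattice, visits to $\Omega$ correspond to primitive vectors $(v_1,v_2)$ with $0<v_1\le 1$ becoming horizontal at time $v_2/v_1$. Visits to $\Omega\setminus A_c$ are exactly those with $v_1\in(c,1]$. Whenever a count differs from $N_n$, some primitive vector must lie in a thin region of small area. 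Lemma~\ref{Bounds on NOmega}, in the form \eqref{bound on nmbo big}, then bounds the probability of that event by twice the area. This works even though the lattice is $d_cH_x$ or a fixed shear of it, because shears and $d_c$ preserve area and primitivity.

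\textbf{Step 1: $D_n\approx M_n$.} Put $c=c_n$ and $y=\phi_c(x)$. The change of time $h_{s/c^2}d_c=d_ch_s$ shows that the $n$th return of $y$ to $A_c$ occurs at horocycle time $t_n(x)/c^2$. Hence $D_n(x)$ is the number of primitive vectors of $H_y=d_cH_x$ with $v_1\in(c,1]$, $v_2>0$ and $v_2/v_1\le t_n(x)/c^2$. The horizontal vector itself, at time $0$, is excluded because $v_2>0$.

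Since $R\in L^1(m)$, Birkhoff's theorem gives $t_n(x)/n\to\bar\rho$ in $m$-probability. So for fixed $\delta>0$, outside a set of measure $o(1)$ the counting region is sandwiched between the fixed rectangles $(c,1]\times\bigl(0,(\bar\rho-\delta)n\bigr]$ and $(c,1]\times\bigl(0,(\bar\rho+\delta)n\bigr]$. This uses that $v_1\in(c,1]$ and $c^{-2}\to1$, so replacing the slanted condition $v_2\le v_1t_n/c^2$ by a horizontal cut costs only a vanishing error. Each of the two differences from $B_n$ has area at most $\delta n\cdot u/n=\delta u$. Applying \eqref{bound on nmbo big} to the lattice $d_cH_x$ gives $m(D_n\neq M_n)\le 4\delta u+o(1)$. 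Letting $\delta\to0$ completes this step.

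\textbf{Step 2: $M_n\approx N_n$ in law.} Here the task is to realize $\mu$ from $m$ without the biased weight $R\,dm/\bar\rho$ spoiling the comparison. Fix $L\in\N$. Summing the suspension formula over $L$ consecutive excursions, $T$-invariance of $m$ gives
\[
\mu(N_n\in J)=\frac{1}{L\bar\rho}\int_\Omega\int_0^{t_L(y)}\ind\{N_n(h_sH_y)\in J\}\,ds\,dm(y).
\]
Two errors are then removed in turn.
\begin{itemize}
\item Replace $m$ by $m_c=(\phi_c)_*m$. By \eqref{eq:conjugacy} this writes $y=\phi_c(x)$, and the cost is $\|m_c-m\|_{\TV}\to0$; the weight is bounded on the truncated set introduced below.
\item Replace the weight $t_L(y)/(L\bar\rho)$ by $1$. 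The error is at most $\int|t_L/(L\bar\rho)-1|\,dm$, which tends to $0$ as $L\to\infty$ by the $L^1$ ergodic theorem. This uses ergodicity of $T$.
\end{itemize}
For $s\le t_L(y)$, the shear $h_s$ moves $(v_1,v_2)$ to $(v_1,v_2-sv_1)$. So $N_n(h_sH_y)\neq N_n(H_y)=M_n(x)$ forces a primitive vector of $H_y$ into one of two strips: one along the bottom edge and one along the top edge of $B_n$, each of width $u/n$ and height at most $t_L(y)$.

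On the event $\{t_L\le KL\}$, whose complement has small measure uniformly in $n$, both strips sit inside fixed regions of total area $\le 2KLu/n$. By \eqref{bound on nmbo big} the bad set then has $m$-measure $O(KLu/n)$.

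Putting this together, the difference $\sup_J|m(M_n\in J)-\mu(N_n\in J)|$ is bounded by $o_L(1)+o_K(1)+O_{K,L}(1/n)+o(1)$. The order of limits is: choose $K$ and $L$ large first, then send $n\to\infty$. All bounds are uniform in $J$, so combining with Step~1 yields the proposition.

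\textbf{Main obstacle.} I expect the main difficulty to be the bookkeeping in Step~2. The bottom strip is harmless. For the top strip one must check that truncating $t_L$ and applying Lemma~\ref{Bounds on NOmega} to a random-height strip is legitimate. I would handle this by dominating the random strip with the fixed one $(c,1]\times\bigl(\bar\rho n-KL,\bar\rho n\bigr]$ on $\{t_L\le KL\}$, and by intersecting with the good set from Step~1 so that $c$ and the slant play no role.
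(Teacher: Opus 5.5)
Your proof is correct. Step~1 coincides with the paper's Lemma~\ref{lem:defect-rectangle-comparison}, but Step~2 removes the roof weight by a different mechanism.

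\textbf{How the two routes differ.} The paper uses the one-excursion suspension formula, after invoking $d_{c_n}$-invariance of $\mu$. It truncates $R\le L$ and arrives at $\bar\rho^{-1}\int_\Omega R\,\ind_{\{D_n\in J\}}\,dm$. To drop the weight $R$ it then needs two further ingredients: the uniform almost $T$-invariance of the events $\{D_n\in J\}$ (Lemma~\ref{lem: one return discrepancy}, from Kac's lemma), and the decorrelation Lemma~\ref{lem:almost-invariant-weight}. You instead sum the suspension formula over $L$ excursions. This is legitimate because $H_{Ty}=h_{R(y)}H_y$ and $T$ preserves $m$. The weight becomes $t_L/(L\bar\rho)$, which is already $L^1(m)$-close to $1$ by the mean ergodic theorem. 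The longer time window only raises the height of the edge strips to $KL$. Since $L$ is fixed before $n\to\infty$, this costs $O(KLu/n)$.

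In effect you build the ergodic averaging from the proof of Lemma~\ref{lem:almost-invariant-weight} directly into the suspension formula. You therefore compare $M_n$ with $N_n$ directly, with no invariance property of the discrepancy events, and your route is shorter. The paper's route keeps the time window to a single excursion. It also isolates the weight removal in an abstract statement valid for any $g\in L^1(m)$ and any uniformly almost-invariant family.

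\textbf{Two small repairs.}
\begin{enumerate}
\item Since $h_s$ with $s\ge0$ moves vectors downward, the vectors of $H_y$ that can enter $B_n$ lie in $(c,1]\times(\bar\rho n,\bar\rho n+KL]$. They do not lie in $(c,1]\times(\bar\rho n-KL,\bar\rho n]$, as you wrote in your last paragraph; the area is the same.
\item Replace the weight by $1$ under $m$ first. The remaining integrand $t_L(y)^{-1}\int_0^{t_L(y)}\ind_{\{N_n(h_sH_y)\in J\}}\,ds$ then lies in $[0,1]$. Passing from $m$ to $m_c$ then costs at most $\|m-m_c\|_{\TV}$, and the truncation needs only $m_c(t_L>KL)\le c^{-2}\bar\rho/K$. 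If the weight is still present, you need the weighted tail $\int_{\{t_L>KL\}}t_L\,dm/(L\bar\rho)$ to be small, not merely the measure of $\{t_L>KL\}$. That is also true, but it is an extra step your write-up glosses over.
\end{enumerate}
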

Taking $J=\{0\}$ will transfer the Haar estimate for empty rectangles to the desired bound on $m(D_n=0)$. We first compare $D_n$ with the section count $M_n$, and then pass from the section to the suspension. This second comparison introduces the roof weight $R$, which we remove using the almost invariance of the discrepancy events.  Setting $J=\{1\}$ allows us to recover the fact that the BCZ map is weakly mixing. See Remark \ref{Rem: Recovering CQ}.

\begin{lemma}\label{lem:defect-rectangle-comparison}
    For a fixed $u>0$,
    \begin{equation*}
        \lim_{n\rightarrow \infty}m(D_n\neq M_n)=0.
    \end{equation*}
\end{lemma}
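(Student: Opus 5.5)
The plan is to realize $D_n(x)$ itself as a count of primitive vectors of $H_{\phi_{c_n}x}=d_{c_n}H_x$ in a slanted thin region. This region differs from $B_n$ only in a set of small area, and Lemma~\ref{Bounds on NOmega} then controls the probability that a primitive vector lands in the difference. Throughout write $c=c_n$ and $y=\phi_c(x)$.

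\emph{Step 1 (geometric description of $D_n$).} A primitive vector $v=(v_1,v_2)$ with $v_1>0$ of $H_y$ becomes horizontal under $h_s$ at time $s=v_2/v_1$, with horizontal length $v_1$. The returns of $h_sH_y$ to $\Omega$ at positive times are exactly these events with $v_1\in(0,1]$ and $v_2>0$, and at such a return the BCZ coordinate $a$ equals $v_1$. Hence a visit of the $T$-orbit of $y$ to $\Omega\setminus A_c$ corresponds to a primitive $v$ with $v_1\in(c,1]$. By the time change $h_{s/c^2}d_c=d_ch_s$ and the conjugacy $T_c\circ\phi_c=\phi_c\circ T$, the $n$th return of $y$ to $A_c$ occurs at flow time $\tau_n(x)\coloneq c^{-2}t_n(x)$, with $t_n$ as in \eqref{definition of t_n}. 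This gives, for $m$-a.e.\ $x$,
\[
D_n(x)=\#\bigl\{v\in\Lambda_y:\ v_1\in(c,1],\ 0<v_2\le v_1\tau_n(x)\bigr\}.
\]
The boundary bookkeeping is as follows. The point $y$ has its own horizontal vector of length $\le c$, so nothing is counted at time $0$. The endpoint at time $\tau_n$ is a return to $A_c$, so it is excluded. Ties have measure zero. Compare this with $M_n(x)=\#\{v\in\Lambda_y: v_1\in(c,1],\ 0<v_2\le\bar\rho n\}$.

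\emph{Step 2 (the thresholds agree to first order).} Since $R=1/(ab)\in L^1(m)$ and $T$ is ergodic, Birkhoff's theorem gives $t_n(x)/n\to\bar\rho$ $m$-a.e., hence in probability. Fix $\delta>0$ and let $G_n\coloneq\{|t_n-\bar\rho n|\le\delta n\}$, so $m(\Omega\setminus G_n)\to0$. For $x\in G_n$ and $v_1\in(c,1]$ we have $v_1c^{-2}\in(c^{-1},c^{-2}]$, so
\[
v_1\tau_n(x)\in\bigl[\bar\rho n(1-2\delta),\ \bar\rho n(1+2\delta)\bigr]
\]
for $n$ large, using $c^{-2}=1+O(u/n)$. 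Consequently, on $G_n$, every vector counted by exactly one of $D_n$ and $M_n$ lies in the deterministic rectangle
\[
B_n'\coloneq(c,1]\times\bigl[\bar\rho n(1-2\delta),\,\bar\rho n(1+2\delta)\bigr],
\qquad \Area(B_n')=4\delta u\bar\rho .
\]

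\emph{Step 3 (conclusion).} We have $\{D_n\neq M_n\}\subset(\Omega\setminus G_n)\cup\{N_{B_n'}(d_cH_x)\ge1\}$. Applying \eqref{bound on nmbo big} with $c=c_n$ (its constant is independent of $c$) gives
\[
m(D_n\neq M_n)\le m(\Omega\setminus G_n)+8\delta u\bar\rho .
\]
Letting $n\to\infty$ and then $\delta\to0$ proves the lemma.

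I expect Step 1 to be the main obstacle: making rigorous the identification of $T$-visits with primitive vectors becoming horizontal, and verifying that the $n$th return to $A_c$ happens exactly at flow time $c^{-2}t_n(x)$ via the conjugacy. This requires checking that the time-change identity transports return times correctly and handling the null sets of ties and boundary cases. Steps 2 and 3 are then routine.
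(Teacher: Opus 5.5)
Your proposal is correct and follows the paper's proof essentially step for step. The paper also identifies $D_n(x)$ with the number of primitive vectors of $d_{c_n}H_x$ in $\{c_n<v_1\le 1,\ 0<v_2\le v_1t_n(x)/c_n^2\}$ via $t_{n+D_n(x)}(\phi_{c_n}x)=c_n^{-2}t_n(x)$, restricts to the ergodic-theorem set where $t_n(x)/n\approx\bar\rho$, and traps the symmetric difference in a thin rectangle of area $O(u\delta)$ to which \eqref{bound on nmbo big} applies; the only cosmetic difference is that you use the window $\bar\rho n(1\pm2\delta)$ (valid since $\bar\rho>1/2$ and $c_n\to1$) where the paper uses $(c_n,1]\times((\bar\rho-\delta)n,(\bar\rho+\delta)n/c_n^2]$.
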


\begin{proof}
Fix $0<\delta<\bar\rho$, recall \eqref{definition of t_n}, and define
\begin{equation*}
G_{n,\delta} \coloneq \left\{x\in\Omega:
(\bar\rho-\delta)n\leq t_n(x)\leq(\bar\rho+\delta)n
\right\}.
\end{equation*}
The ergodic theorem gives $t_n(x)/n\to\bar\rho$ for $m$-almost every $x$. Consequently, 
\begin{equation} \label{gndcomp small} \lim_{n\to\infty} m(G_{n,\delta}^{c}) = 0.
\end{equation}

The linear map $d_{c_n}$ multiplies slopes by $c_n^{-2}$ and sends the horizontal cutoff $1$ to $c_n$. Thus the $n$th return of $H_x$ to the section corresponds to the $n$th return of $H_{\phi_{c_n}x}=d_{c_n}H_x$ to $A_{c_n}$. By the definition of $D_n$, this is its $(n+D_n(x))$-th return to $\Omega$, so
\begin{equation*}
 t_{n+D_n(x)}(\phi_{c_n}x)=c_n^{-2}t_n(x).
\end{equation*}
The intervening returns to $\Omega\setminus A_{c_n}$ correspond exactly to primitive vectors $(v_1,v_2)\in d_{c_n}H_x$ with $c_n<v_1 \leq 1$ and $0<v_2/v_1 \leq c_n^{-2}t_n(x)$. Therefore,
\begin{equation*}
D_n(x) = \#\left\{ (v_1,v_2)\in(d_{c_n}H_x)_{\mathrm{prim}}: c_n<v_1 \leq1,\quad 0<v_2\leq\frac{v_1t_n(x)}{c_n^2}
\right\},
\end{equation*}
whereas
\begin{equation*}
M_n(x) = \#\left\{ (v_1,v_2)\in(d_{c_n}H_x)_{\mathrm{prim}}: c_n<v_1\leq1,\quad 0<v_2\leq\bar\rho n
\right\}.
\end{equation*}
For $x\in G_{n,\delta}$ and $c_n<v_1\leq1$,
\begin{equation*}
(\bar\rho-\delta)n
\leq\frac{(\bar\rho-\delta)n}{c_n}
\leq\frac{v_1t_n(x)}{c_n^2}
\leq\frac{(\bar\rho+\delta)n}{c_n^2}.
\end{equation*}

Therefore, if $x\in G_{n,\delta}$, then every vector counted by exactly one of $D_n(x)$ and $M_n(x)$ belongs to the rectangle
\[
C_{n,\delta}\coloneq (c_n,1]\times \left( (\bar\rho-\delta)n,\frac{(\bar\rho+\delta)n}{c_n^2} \right].
\]
In particular,
\begin{equation*}
\{D_n\neq M_n\} \subset G_{n,\delta}^{c} \cup \left\{ x:N_{C_{n,\delta}}(d_{c_n}H_x)\geq1 \right\}.
\end{equation*}
Lemma~\ref{Bounds on NOmega} then implies that
\begin{equation*}
m\left\{ x:N_{C_{n,\delta}}(d_{c_n}H_x)\geq1 \right\} \leq 2\Area(C_{n,\delta}).
\end{equation*}
Since $n(1-c_n)=u$, the area of the strip is
\begin{equation*}
\Area(C_{n,\delta}) = n(1-c_n) \left( \frac{\bar\rho+\delta}{c_n^2} -(\bar\rho-\delta) \right) = \frac{u}{c_n^2} \left[ \bar\rho(1-c_n^2)+\delta(1+c_n^2) \right].
\end{equation*}
Combining the above observations, we conclude
\begin{equation*}
m(D_n\neq M_n) \leq m(G_{n,\delta}^{c}) + \frac{2u}{c_n^2} \left[ \bar\rho(1-c_n^2)+\delta(1+c_n^2) \right].
\end{equation*}
Letting $n\to\infty$ and using $c_n\to1$ and \eqref{gndcomp small} yields
\begin{equation*}
\limsup_{n\to\infty}m(D_n\neq M_n)\leq4u\delta.
\end{equation*}
Since $\delta>0$ can be arbitrarily small, the result follows.
\end{proof}

The next lemma shows that, for fixed $u>0$, the events determined by $D_n$ become asymptotically invariant under $T$, uniformly over all subsets of possible discrepancy values.

\begin{lemma}\label{lem: one return discrepancy}
Fix $u > 0$. For $n > u$ and $J\subset \N_0$, define $E_{n,J}\coloneq \{x\in\Omega:D_n(x)\in J\}$. Then
\[
\lim_{n\to\infty} \sup_{J\subset\mathbb{N}_0}m(E_{n,J}\triangle T^{-1}E_{n,J}) = 0.
\]
\end{lemma}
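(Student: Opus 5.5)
The plan is to reuse the computation from the proof of Lemma~\ref{lem:graph-decompositions}, now with $n$ in place of $n_j$ and $c_n$ in place of $\beta_j$. The key observation is that the symmetric difference is controlled by the set where $D_n$ changes under one step of $T$:
\[
E_{n,J}\triangle T^{-1}E_{n,J}\subset\{x\in\Omega: D_n(Tx)\neq D_n(x)\}.
\]
Indeed, if $D_n(Tx)=D_n(x)$, then $x\in E_{n,J}$ if and only if $Tx\in E_{n,J}$. The right-hand side does not depend on $J$, so it suffices to show that $m(D_n\circ T\neq D_n)\to0$.

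To compute $D_n(Tx)$, use the conjugacy $T_{c_n}\circ\phi_{c_n}=\phi_{c_n}\circ T$ to write $T_{c_n}^j\phi_{c_n}Tx=T_{c_n}^{j+1}\phi_{c_n}x$. Substituting into \eqref{definition of D_n}, the sum telescopes to
\[
D_n(Tx)-D_n(x)=r_{c_n}\bigl(T_{c_n}^n\phi_{c_n}x\bigr)-r_{c_n}(\phi_{c_n}x)=r_{c_n}(\phi_{c_n}T^nx)-r_{c_n}(\phi_{c_n}x).
\]
Hence $D_n(Tx)\neq D_n(x)$ forces either $r_{c_n}(\phi_{c_n}x)>1$ or $r_{c_n}(\phi_{c_n}T^nx)>1$.

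Set $q_n\coloneq m\{x: r_{c_n}(\phi_{c_n}x)>1\}$. By \eqref{eq:conjugacy}, $q_n=m_{c_n}(r_{c_n}>1)$. Since $r_{c_n}-1$ is a nonnegative integer, Markov's inequality together with Kac's identity \eqref{eq:kac} gives $q_n\le c_n^{-2}-1$. The second event is $T^{-n}$ of the first, and $T$ preserves $m$, so it also has measure $q_n$. Therefore
\[
\sup_{J\subset\N_0}m(E_{n,J}\triangle T^{-1}E_{n,J})\le 2(c_n^{-2}-1)\longrightarrow0,
\]
because $c_n=1-u/n\to1$. In fact this bound is of order $4u/n$.

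The only point needing care is the telescoping identity, specifically that the conjugacy holds $m$-almost everywhere so that the shifted induced orbit is valid. That almost-everywhere validity is already part of the stated properties of $\phi_c$.
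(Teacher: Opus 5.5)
Your proposal is correct and follows essentially the same argument as the paper. Both proofs contain $E_{n,J}\triangle T^{-1}E_{n,J}$ in the $J$-independent set $\{D_n\circ T\neq D_n\}$, use the telescoping identity $D_n(Tx)-D_n(x)=r_{c_n}(\phi_{c_n}T^nx)-r_{c_n}(\phi_{c_n}x)$, and bound each of the two resulting events by $q_n\le c_n^{-2}-1$ via Kac's lemma and invariance of $m$. You additionally spell out the telescoping derivation that the paper leaves implicit.
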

\begin{proof}
Define $q_n \coloneq m_{c_n}\{r_{c_n}>1\}$. Kac's lemma gives
\begin{equation*}
q_n \leq \int_{A_{c_n}}(r_{c_n}-1)\,dm_{c_n}
=c_n^{-2}-1 \longrightarrow 0.
\end{equation*}

Notice
\begin{equation*}
D_n(Tx)-D_n(x) = r_{c_n}(\phi_{c_n}T^n x)-r_{c_n}(\phi_{c_n}x).
\end{equation*}
Therefore,
\begin{equation*}
\{D_n\circ T\neq D_n\} \subset \{r_{c_n}\circ\phi_{c_n}>1\} \cup T^{-n}\{r_{c_n}\circ\phi_{c_n}>1\}.
\end{equation*}
Since $(\phi_{c_n})_*m=m_{c_n}$ and $T$ preserves $m$, each event on the right has measure $q_n$. Consequently,
\begin{equation*}
m(D_n\circ T\neq D_n)\leq2q_n.
\end{equation*}

Finally, for every $J\subset\mathbb{N}_0$, we have $E_{n,J}\triangle T^{-1}E_{n,J} \subset \{D_n \circ T \neq D_n\}$, hence
\begin{equation*}
\sup_{J\subset\mathbb{N}_0} m(E_{n,J}\triangle T^{-1}E_{n,J}) \leq 2q_n \longrightarrow 0. \qedhere
\end{equation*}
\end{proof}

The following lemma is a standard consequence of ergodicity. Besides that, it does not use any specific information about the BCZ map.
\begin{lemma}
\label{lem:almost-invariant-weight}
Suppose that a family of measurable sets $E_{n,J} \subset \Omega$ satisfies
\begin{equation*}
\lim_{n\to\infty} \sup_{J\subset\N_0} m(E_{n,J}\triangle T^{-1}E_{n,J}) = 0.
\end{equation*}
Then, for every $g\in L^1(m)$,
\begin{equation*}
\lim_{n\to\infty} \sup_{J\subset \N_0} \left| \int_\Omega g\mathbf{1}_{E_{n,J}}\,dm -m(E_{n,J})\int_\Omega g\,dm \right|
= 0.
\end{equation*}
\end{lemma}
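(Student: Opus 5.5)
The plan is to reduce the claim to bounded $g$ with zero mean, then use ergodicity through a Birkhoff average, while the almost invariance of $E_{n,J}$ lets us replace $\mathbf 1_{E_{n,J}}$ by its translates $\mathbf 1_{E_{n,J}}\circ T^{k}$ at small cost.

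First, the reduction. Replacing $g$ by $g-\int g\,dm$ leaves the expression unchanged, so we may assume $\int_\Omega g\,dm=0$, and the quantity becomes $\sup_J|\int g\mathbf 1_{E_{n,J}}\,dm|$. Since $|\int (g-\tilde g)\mathbf 1_{E}\,dm|\le\|g-\tilde g\|_{L^1}$ uniformly in $E$, a density argument lets us assume $g\in L^\infty$ with $\int g\,dm=0$. Indeed, we can approximate by bounded functions and re-center them, which costs at most twice the $L^1$ error.

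Next, fix $K\in\N$ and exploit invariance. For every $k\ge0$, the triangle inequality together with $T$-invariance of $m$ gives
\[
m\bigl(E_{n,J}\triangle T^{-k}E_{n,J}\bigr)\le k\,m\bigl(E_{n,J}\triangle T^{-1}E_{n,J}\bigr)=:k\,\epsilon_{n,J}.
\]
By invariance of $m$, we have $\int g\,\mathbf 1_{E_{n,J}}\circ T^{k}\,dm=\int (g\circ T^{-k})\mathbf 1_{E_{n,J}}\,dm$. Here $T$ is invertible mod $0$, which follows from the conjugacy to the invertible return map of the horocycle flow. If we prefer to avoid inverses, we can write instead
\[
\int g\,\mathbf 1_{E}\,dm=\int (g\circ T^{k})(\mathbf 1_E\circ T^k)\,dm,
\]
and compare $\mathbf 1_E\circ T^k$ with $\mathbf 1_E$. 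This gives
\[
\Bigl|\int g\mathbf 1_{E_{n,J}}\,dm-\int (g\circ T^k)\mathbf 1_{E_{n,J}}\,dm\Bigr|\le\|g\|_\infty\,k\,\epsilon_{n,J}.
\]
Averaging over $k=0,\dots,K-1$ yields
\[
\Bigl|\int g\mathbf 1_{E_{n,J}}\,dm\Bigr|\le\Bigl\|\tfrac1K\textstyle\sum_{k<K}g\circ T^k\Bigr\|_{L^1}+\|g\|_\infty K\sup_J\epsilon_{n,J}.
\]

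Finally, take limits. By the ergodicity of $T$ (Athreya--Cheung) and the mean ergodic theorem in $L^1$, the first term tends to $0$ as $K\to\infty$, since $\int g\,dm=0$. This term does not depend on $n$ or $J$. So we first choose $K$ to make it smaller than $\eta$. Then we let $n\to\infty$, and the hypothesis kills the second term uniformly in $J$.

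The main obstacle is only bookkeeping: the uniformity in $J$ must be preserved at each step, and it is, because every error bound depends on $E_{n,J}$ only through $\epsilon_{n,J}$ or through constants independent of $E$.
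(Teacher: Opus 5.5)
Your proposal is correct and follows essentially the paper's argument: compare $\int g\mathbf 1_E\,dm$ with $\int (g\circ T^k)\mathbf 1_E\,dm$ via the telescoping bound $m(E\triangle T^{-k}E)\le k\,m(E\triangle T^{-1}E)$, average over $k<K$, let $n\to\infty$ before $K\to\infty$ using ergodicity, and handle $g\in L^1(m)$ by density. The differences are cosmetic: you center $g$ first, whereas the paper keeps the mean and uses $0\le \mathbf 1_E\le 1$ against $\sigma_K g-\int g\,dm$; you use the cruder constant $K$ in place of $(K-1)/2$; and your inverse-free variant is exactly the one the paper uses, so the invertibility remark is unnecessary.
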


\begin{proof}
Write
\[
\theta_n\coloneq \sup_{J\subset\N_0}m(E_{n,J}\triangle T^{-1}E_{n,J}),
\]
so that $\theta_n \to 0$. First suppose that $g$ is bounded. Fix $n,J$, and write $z=\mathbf{1}_{E_{n,J}}$. Telescoping and invariance of $m$ give, for $r \in \N$,
\[
\|z\circ T^r-z\|_{L^1(m)}\leq r\theta_n.
\]
Since $T$ preserves $m$, it follows that
\[
\left|\int_\Omega gz\,dm-\int_\Omega(g\circ T^r)z\,dm\right| = \left|\int_\Omega(g\circ T^r)(z\circ T^r-z)\,dm\right| \leq r\theta_n\|g\|_\infty.
\]
For $K \geq 1$, define
\[
\sigma_Kg \coloneq \frac1K\sum_{r=0}^{K-1}g\circ T^r.
\]
Averaging the preceding inequality and using $0\leq z \leq 1$ yields
\[
\sup_{J\subset\N_0} \left| \int_\Omega g\mathbf{1}_{E_{n,J}}\,dm -m(E_{n,J})\int_\Omega g\,dm \right| \leq \frac{K-1}{2}\theta_n\|g\|_\infty + \left\|\sigma_Kg-\int_\Omega g\,dm\right\|_{L^1(m)}.
\]
For fixed $K$, the first term tends to zero as $n\to\infty$. Ergodicity and boundedness of $g$ imply that $\sigma_Kg\to\int_\Omega g\,dm$ in $L^1(m)$ as $K\to\infty$. Taking these limits in this order proves the assertion for bounded $g$. The extension to $g \in L^1(m)$ is routine.
\end{proof}

Taking $g=R$ in the previous lemma with $E_{n,J}$ as in Lemma~\ref{lem: one return discrepancy} yields
\begin{equation}\label{eq:roofweight}
\lim_{n\to\infty} \sup_{J\subset\N_0} \left| \frac{1}{\bar\rho} \int_\Omega R(x)\ind_{\{D_n(x)\in J\}}\,dm(x) -m(D_n\in J) \right| = 0.
\end{equation}

\begin{proof}[Proof of Proposition~\ref{prop:comparison of D_n and N_n}]
Fix $L>0$. The identity
\begin{equation*}
d_{c_n}h_sH_x = h_{s/c_n^2}d_{c_n}H_x
\end{equation*}
shows that, for $0\leq s\leq L$, a vector $(v_1,v_2)$ in the scaled lattice with $c_n<v_1\leq1$ moves downward by $sv_1/c_n^2\leq L/c_n^2$. Thus the count in $B_n$ can change only if $d_{c_n}H_x$ contains a primitive vector in one of the strips
\begin{equation*}
\begin{aligned}
C_{n,L}^{-} & \coloneq (c_n,1]\times\left(0,\frac{L}{c_n^2}\right],\\
C_{n,L}^{+} & \coloneq (c_n,1]\times \left(\bar\rho n,\bar\rho n+\frac{L}{c_n^2}\right].
\end{aligned}
\end{equation*}
These strips account for vectors leaving through the bottom edge or entering through the top edge of $B_n$, respectively. Their total area is at most $2L(1-c_n)/c_n^2$. 

Define
\begin{equation*}
F_{n,L} \coloneq \left\{ x\in\Omega: N_{C_{n,L}^{-}}(d_{c_n}H_x) + N_{C_{n,L}^{+}}(d_{c_n}H_x) \geq 1 \right\}.
\end{equation*}
Lemma~\ref{Bounds on NOmega} gives
\begin{equation*}
m(F_{n,L}) \leq\frac{4L(1-c_n)}{c_n^2}.
\end{equation*}
For $x\notin F_{n,L}$, we thus have, for $s \in [0,L]$,
\begin{equation*}
N_n(d_{c_n}h_sH_x)=M_n(x).
\end{equation*}

For every $J\subset\mathbb{N}_0$, Haar invariance under $d_{c_n}$ and the suspension formula imply
\begin{equation*}
\mu(N_n\in J) = \int_X \mathbf{1}_{\{N_n(d_{c_n}H)\in J\}}\,d\mu(H) = \frac{1}{\bar\rho} \int_\Omega\int_0^{R(x)} \mathbf{1}_{\{N_n(d_{c_n}h_sH_x)\in J\}} \,ds\,dm(x).
\end{equation*}
If $x\notin F_{n,L}$ satisfies $R(x)\leq L$ and $D_n(x)=M_n(x)$, then for $s \in [0,R(x))$,
\[
N_n(d_{c_n}h_sH_x)=D_n(x).
\]
Thus, on $\{R\leq L\}$, the integration of the absolute difference of $\mathbf{1}_{\{D_n(x)\in J\}}$ and $\mathbf{1}_{\{N_n(d_{c_n}h_sH_x)\in J\}}$  contributes at most $L$ times the measure of $F_{n,L}\cup\{D_n\neq M_n\}$. On $\{R>L\}$, the same difference between the indicator functions contributes at most $R(x)$ on each fiber. Integrating over $\{R>L\}$ gives a contribution at most $\int_{\{R>L\}}R(x)\,dm(x)$. Hence
\begin{equation}\label{eq:suspension-comparison}
\begin{aligned}
&\sup_{J\subset\mathbb{N}_0} \left| \mu(N_n\in J) -\frac{1}{\bar\rho} \int_\Omega R(x)\mathbf{1}_{\{D_n(x)\in J\}}\,dm(x) \right|\\
& \qquad\leq \frac{L}{\bar\rho} \left[ m(D_n\neq M_n) +\frac{4L(1-c_n)}{c_n^2} \right] + \frac{1}{\bar\rho}\int_{\{R>L\}}R\,dm.
\end{aligned}
\end{equation}

By Lemma~\ref{lem:defect-rectangle-comparison}, $m(D_n\neq M_n) \to 0$. Also, $c_n \to 1$. Thus, for fixed $L$, the term in square brackets tends to zero as $n\to\infty$. Combining \eqref{eq:suspension-comparison} with \eqref{eq:roofweight} gives
\begin{equation*}
\limsup_{n\to\infty} \sup_{J\subset\mathbb{N}_0} \left| \mu(N_n\in J)-m(D_n\in J) \right| \leq \frac{1}{\bar\rho}\int_{\{R>L\}}R\,dm.
\end{equation*}
Since $R\in L^1(m)$, the right-hand side tends to zero as $L\to\infty$. This proves Proposition~\ref{prop:comparison of D_n and N_n}.
\end{proof}

\begin{proof}[Proof of Theorem \ref{thm:defect}]
Fix $u>0$. By Proposition~\ref{prop:haar}, for sufficiently large $n$, the Haar rectangle count $N_n$ satisfies
\begin{equation}\label{eq:zero-event-moments}
\mathbb{E}_\mu N_n=2u,
\qquad
\operatorname{Var}_\mu(N_n)\leq2u+e_n,
\qquad
e_n=\frac{4u^2}{c_n n} \bigl(3+\log(\bar\rho n)\bigr)\longrightarrow0.
\end{equation}

For every nonnegative random variable $N$ with positive mean and finite second moment, Cauchy--Schwarz gives

\begin{equation*}
\mathbb{P}(N=0) \leq \frac{\operatorname{Var}(N)}
{(\mathbb{E}N)^2+\operatorname{Var}(N)}.
\end{equation*}
For a fixed positive mean, the last expression is increasing as a function of the variance. Applying \eqref{eq:zero-event-moments} therefore yields
\begin{equation*}
\mu(N_n=0) \leq \frac{2u+e_n}{4u^2+2u+e_n}.
\end{equation*}

We deduce that
\begin{equation*}
m(D_n=0) \leq \frac{2u+e_n}{4u^2+2u+e_n} + \left|m(D_n=0)-\mu(N_n=0)\right|.
\end{equation*}
Letting $n\to\infty$ and applying Proposition \ref{prop:comparison of D_n and N_n} proves
\begin{equation*}
\limsup_{n\to\infty}m(D_n=0) \leq \frac{2u}{4u^2+2u} = \frac{1}{1+2u}. \qedhere
\end{equation*}
\end{proof}

\begin{remark}\label{Rem: Recovering CQ}
The same estimates recover weak mixing of the BCZ map by the Cheung and Quas criterion \cite[Lemma~2]{CQ}. For $0<u<1/2$, the inequality $\mathbf{1}_{\{N=1\}}\geq N-N(N-1)$ for $N \in \N_0$ and \eqref{eq:zero-event-moments} give
\[
 \mu(N_n=1)\geq 2u-4u^2-e_n.
\]
Proposition~\ref{prop:comparison of D_n and N_n} therefore implies
\[
 \liminf_{n\to\infty}m(D_n=1)\geq 2u-4u^2.
\]
Choosing $u=1/4$, one gets a uniform bound of $1/4$ for $\liminf_{n\to\infty}m(D_n=1)$. Moreover, the conjugacy and the definition of $D_n$ give
\[
 m\bigl(\{y\in A_{c_n}: \sum_{j=0}^{n-1}r_{c_n}(T_{c_n}^j y)=n+1\}\Bigr )=c_n^2m(D_n=1).
\]
Thus the event that the $n$th induced return occurs at ordinary time $n+1$ has measure bounded away from zero for all sufficiently large $n$. The other hypotheses of the criterion are of course satisfied already in \cite{CQ}. Applying the criterion then proves weak mixing. 
\end{remark}

\noindent \textbf{Acknowledgments.} A version of Theorem~\ref{thm:defect} was formulated through interaction with ChatGPT 5.6 Sol. We thank Yitwah Cheung, Sohail Farhangi, and Tianru Zhu for helpful discussions.


\begin{thebibliography}{99}

\bibitem{ArtilesMoments}
A.~Artiles,
\emph{Moments of the cross-sectional Siegel--Veech transform},
arXiv:2609.36463 [math.DS], 2026.
\url{https://arxiv.org/abs/2609.36463}.

\bibitem{AC}
J.~S.~Athreya and Y.~Cheung,
\emph{A Poincar\'e section for the horocycle flow on the space of lattices},
Int. Math. Res. Not. IMRN \textbf{2014} (2014), no.~10, 2643--2690.

\bibitem{Banff}
J.~S.~Athreya, F.~P.~Boca, J.~Chaika, Y.~Cheung,
N.~Chevallier, S.~Leli\`evre, J.~Tseng, and A.~Quas,
\emph{Banff workshop report: Generalized Gauss maps and Farey statistics},
BIRS workshop report, September 16--23, 2012.
\url{https://archytas2.birs.ca/workshops//2012/12frg172/report12frg172.pdf}.

\bibitem{Billingsley}
P.~Billingsley,
\emph{Convergence of Probability Measures},
2nd ed., Wiley, 1999.

\bibitem{BCZHall}
F.~P.~Boca, C.~Cobeli, and A.~Zaharescu,
\emph{A conjecture of R.~R.~Hall on Farey points},
J. Reine Angew. Math. \textbf{535} (2001), 207--236.
\href{https://doi.org/10.1515/crll.2001.049}
{doi:10.1515/crll.2001.049}.

\bibitem{BZ}
F.~P.~Boca and A.~Zaharescu,
\emph{Farey fractions and two-dimensional tori},
in \emph{Noncommutative Geometry and Number Theory},
Aspects Math., vol.~E37,
Vieweg, Wiesbaden, 2006, pp.~57--77.

\bibitem{CQ}
Y.~Cheung and A.~Quas,
\emph{BCZ map is weakly mixing},
arXiv:2403.14976, 2024.
\url{https://arxiv.org/abs/2403.14976}.

\bibitem{deLaRue2006}
T.~de la Rue,
\emph{An introduction to joinings in ergodic theory},
Discrete Contin. Dyn. Syst. \textbf{15} (2006), no.~1, 121--142.
\href{https://doi.org/10.3934/dcds.2006.15.121}
{doi:10.3934/dcds.2006.15.121}.

\bibitem{Fairchild}
S.~K.~Fairchild,
\emph{A higher moment formula for the Siegel--Veech transform
over quotients by Hecke triangle groups},
Groups Geom. Dyn. \textbf{15} (2021), no.~1, 57--81.
\href{https://doi.org/10.4171/GGD/591}
{doi:10.4171/GGD/591}.

\bibitem{Hall1994}
R.~R.~Hall,
\emph{On consecutive Farey arcs II},
Acta Arith. \textbf{66} (1994), no.~1, 1--9.

\bibitem{Schmidt}
W.~M.~Schmidt,
\emph{A metrical theorem in geometry of numbers},
Trans. Amer. Math. Soc. \textbf{95} (1960), 516--529.

\bibitem{Siegel}
C.~L.~Siegel,
\emph{A mean value theorem in geometry of numbers},
Ann. of Math. (2) \textbf{46} (1945), 340--347.

\bibitem{Walters}
P.~Walters,
\emph{An Introduction to Ergodic Theory},
Graduate Texts in Mathematics, vol.~79,
Springer-Verlag, New York, 1982.

\end{thebibliography}
\end{document}